\documentclass[12pt,reqno]{amsart}
\setlength{\textheight}{23.1cm}
\setlength{\textwidth}{16cm}
\setlength{\topmargin}{-0.8cm}
\setlength{\parskip}{0.3\baselineskip}
\hoffset=-1.4cm
\usepackage{amsmath,amssymb,amsfonts,mathrsfs,mathtools,enumitem}
\usepackage[all]{xy}
\usepackage{hyperref}

\usepackage[all]{xy}
\usepackage{hyperref}

\usepackage{cleveref}

\hypersetup{
	colorlinks,
	citecolor=red,
	filecolor=black,
	linkcolor=blue,
	urlcolor=black
}

\setcounter{tocdepth}{2}

\theoremstyle{plain}
\newtheorem{thm}{Theorem}[section]
\newtheorem{con}[thm]{Conjecture}
\newtheorem{cor}[thm]{Corollary}
\newtheorem{lmm}[thm]{Lemma}
\newtheorem{prp}[thm]{Proposition}

\theoremstyle{definition}
\newtheorem{defn}[thm]{Definition}
\newtheorem{rem}[thm]{Remark}
\newtheorem{convention}[thm]{Convention}

\numberwithin{equation}{section}

\begin{document}
\title[Injective self-maps of algebraic varieties]{On the injective self-maps of algebraic varieties}

\author[I. Biswas]{Indranil Biswas}

\address{Mathematics Department, Shiv Nadar University, NH91, Tehsil Dadri, Greater Noida, Uttar Pradesh 
201314, India}

\email{indranil.biswas@snu.edu.in, indranil29@gmail.com}

\author[N. Das]{Nilkantha Das}

\address{Stat-Math Unit, Indian Statistical Institute, 203 B.T. Road, Kolkata 700 108, India}

\email{dasnilkantha17@gmail.com}

\subjclass[2020]{14E25, 14R10}

\keywords{Endomorphism of varieties; K\"{a}hler differentials; Miyanishi conjecture}

\begin{abstract}
A conjecture of Miyanishi says that an endomorphism of an algebraic variety, defined over an algebraically closed field of characteristic zero,
is an automorphism if the endomorphism is injective outside a closed subset of codimension at least $2$. We prove the conjecture in the following cases:
\begin{enumerate}
\item The variety is non-singular.

\item The variety is a surface.

\item The variety is locally a complete intersection that is regular in codimension $2$.
\end{enumerate}
We also discuss a few instances where an endomorphism of a variety, satisfying the hypothesis of the
conjecture of Miyanishi, induces an automorphism of the non-singular locus of the variety. Under additional
hypotheses, we prove that the conjecture holds when the variety has only isolated singularities.
\end{abstract}

\maketitle

\section{Introduction}

Throughout, $k$ denotes an algebraically closed field of characteristic zero unless specified otherwise.

\subsection*{Theorem of Ax} A famous theorem of Ax, \cite{Ax}, and Nowak says that an injective endomorphism of an algebraic variety $X$ over 
$k$ is an automorphism. Strictly speaking, Ax proved that an injective endomorphism of $X$ is necessarily 
bijective, and later Nowak, \cite{Nowak}, showed that a bijective endomorphism of $X$ is necessarily an 
automorphism.

Since the initial work of Ax, this theorem has been proved via different approaches (see \cite{Borel}, \cite{Iitaka} 
for example). This result was preceded by a similar result for injective endomorphisms of 
$\mathbb{A}^n_{\mathbb{R}}$ by Bia{\l}ynicki-Birula and Rosenlicht \cite{Bialynici-Birula-Rosenlicht}. The 
result of \cite{Bialynici-Birula-Rosenlicht} was later extended by Kurdyka and Rusek, \cite{Kurdyka-Rusek}, to 
semi-algebraic maps.

Borel's proof in \cite{Borel} of the theorem of \cite{Ax} also applies to non-singular varieties defined over the
field of real numbers. Further generalizations were found by Cynk and Rusek \cite{Cynk-Rusek}. Subsequently, Kurdyka, 
\cite{Kurdyka}, extended the result of Borel to singular real algebraic varieties under the weakened condition 
that the injective morphism is not necessarily invertible, but it is at least surjective. Such a morphism is, in fact, a 
homeomorphism, as shown by Parusi\'nski \cite{Parusinski}. Grothendieck, \cite{EGA-IV}, proved an analog of 
Ax's theorem for relative schemes and projective limits of schemes. Gromov in \cite{Gromov} investigated the 
injective endomorphisms of symbolic varieties.

\subsection*{Miyanishi conjecture} The significance of the above theorem of Ax is that injectivity, which is a set-theoretic property of an 
endomorphism, enforces automorphism which is an algebraic property. It is natural to ask if the global 
injectivity hypothesis in the theorem of Ax can be weakened. Along this direction, M. Miyanishi \cite{Miyanishi} 
in 2005 proposed a conjecture which is referred to as the Miyanishi conjecture.

\begin{con}[Miyanishi]\label{main_conj}
Let $\phi\,:\,X \,\longrightarrow\, X$ be an endomorphism of an algebraic variety $X$ over an algebraically
closed field of characteristic zero, and let $Y\, \subset\, X$ be a proper closed algebraic subset
such that the restriction of $\phi$ to the complement
$X \setminus Y$ is injective. Furthermore suppose that either $\phi$ is quasi-finite
or the codimension of $Y$ in $X$ is at least $2$. Then $\phi$ is an automorphism.
\end{con}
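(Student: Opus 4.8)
The plan is to reduce Miyanishi's conjecture to the theorem of Ax and Nowak: once $\phi$ is shown to be injective on all of $X$, injectivity forces bijectivity by Ax, and bijectivity forces $\phi$ to be an automorphism by Nowak. The strategy is therefore to upgrade the injectivity of $\phi$ on the dense open set $X\setminus Y$ to injectivity on the whole of $X$, and the two hypotheses (quasi-finiteness, or $\mathrm{codim}\,Y\ge 2$) are the two handles for doing so.

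The first observation is that $\phi$ is automatically birational. Since $X$ is irreducible and $\phi$ is injective on the dense open set $X\setminus Y$, the morphism $\phi$ is dominant; being a self-map of a variety it has equal source and target dimension, hence is generically finite of some degree $d=[k(X):\phi^{*}k(X)]$. Because $Y$ is a proper closed subset, $\phi(Y)$ is not dense, so a general point $z\in X$ lies in the image of $\phi$ but outside $\overline{\phi(Y)}$; its fibre $\phi^{-1}(z)$ then lies entirely in $X\setminus Y$, where $\phi$ is injective, forcing $\#\phi^{-1}(z)=1$. Thus $d=1$ and $\phi$ is a birational endomorphism, whose failure of being injective or \'etale is confined to the proper closed set $\overline{\phi(Y)}$ together with the exceptional locus of $\phi$.

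I would next reduce the codimension hypothesis to the quasi-finite one, and then settle the quasi-finite case by Zariski's Main Theorem. If $\phi$ had a positive-dimensional fibre, then since $\phi$ is injective on $X\setminus Y$ this fibre would meet $X\setminus Y$ in at most one point, so a positive-dimensional part of it would be contracted into $Y$; for a birational morphism of a variety that is regular in codimension $1$, such a contraction occurs along a subvariety of codimension one (purity, in the spirit of Zariski--Nagata and the structure theory of birational morphisms), which is incompatible with $\mathrm{codim}\,Y\ge 2$. Hence $\phi$ is quasi-finite, and Zariski's Main Theorem factors it as an open immersion $j\colon X\hookrightarrow\overline{X}$ followed by a finite morphism $g\colon\overline{X}\to X$. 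As $\deg j=1$ and $\deg\phi=1$, the finite morphism $g$ is birational; if $X$ is normal then $g$ is an isomorphism, being finite and birational onto a normal variety, whence $\phi=g\circ j$ is an open immersion, and in particular injective. Ax and Nowak then finish the proof.

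The crux --- and the reason the conjecture is subtle in general --- is precisely the regularity input used twice above: purity of the branch locus and the statement ``finite birational onto a normal variety is an isomorphism''. For smooth $X$ (and more generally for normal $X$) both are available and the argument closes; but for a general singular $X$ the exceptional locus of a birational self-map can hide inside the singular set, and the differential $d\phi\colon\phi^{*}\Omega^{1}_{X}\to\Omega^{1}_{X}$ no longer cleanly detects where $\phi$ fails to be \'etale. The surface case and the locally-complete-intersection-regular-in-codimension-$2$ case therefore call for a finer study of the sheaf of K\"ahler differentials and of the conductor along the singular locus: the locally complete intersection condition makes $\Omega^{1}_{X}$ and its exterior powers behave like those of a smooth variety away from a small set, while regularity in codimension $2$ controls the relevant reflexive sheaves in the range needed to push a purity-type argument across the singularities. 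I expect this control of the ramification at the singular locus to be the main obstacle.
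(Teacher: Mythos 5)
Your skeleton (birationality, then quasi-finiteness, then Zariski's Main Theorem plus Ax--Nowak) is exactly the machinery the paper uses for its \emph{partial} results: birationality is Remark~\ref{rem1}, and ``quasi-finite $\Rightarrow$ automorphism'' is Lemma~\ref{quasi-finite to automorphism}; your treatment of the quasi-finite branch is essentially that lemma (modulo the reduction to normal $X$, Lemma~\ref{normal_case_reducing}, which you use implicitly when you say ``if $X$ is normal then $g$ is an isomorphism''). But keep in mind that the statement you are proving is a \emph{conjecture}, which the paper establishes only in special cases, so the decisive step of your argument must be examined with suspicion --- and indeed it fails. The gap is the purity claim: ``for a birational morphism of a variety that is regular in codimension $1$, such a contraction occurs along a subvariety of codimension one.'' The Zariski--van der Waerden purity theorem requires the \emph{target} to be locally factorial (e.g.\ non-singular), not merely normal or $(\mathrm{R}_1)$. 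For a normal target it is simply false: a small resolution $\widetilde{X} \longrightarrow X = \{xy = zw\} \subset \mathbb{A}^4$ contracts a curve, of codimension $2$ in the three-fold $\widetilde{X}$, to the cone point, and the target here is a normal hypersurface with an isolated singularity. Consequently your concluding assertion that ``for smooth $X$ (and more generally for normal $X$) both are available and the argument closes'' is wrong for the purity half; were it correct, then combined with the known reduction of the conjecture to normal varieties (Lemma~\ref{normal_case_reducing}) your argument would prove the conjecture in full generality, which remains open.

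This is precisely where the paper is more careful than you are. Purity is invoked only for the restriction $\phi_2 \colon \phi^{-1}(W) \longrightarrow W$ whose target $W$ is the \emph{smooth} locus (Proposition~\ref{open immersion result}); when the target is allowed to be singular, quasi-finiteness of $\phi$ is extracted by entirely different means: a dimension count for surfaces (there $Y$ is a finite set, so quasi-finiteness is automatic), and, in the $\mathbb{Q}$-factorial and locally complete intersection cases, a homological study of $d\phi$ and of $\Omega_X$ --- K\"allstr\"om's results on submersive and d.c.i.\ morphisms, together with reflexivity of the relevant sheaves (Lemma~\ref{submersive endomorphism lemma}, Lemma~\ref{reflexivity lemma}, Theorem~\ref{th3}). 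Your final paragraph correctly senses that ramification hiding in the singular locus is the obstruction; the point to internalize is that this is not a technicality one can push through with a purity ``in the spirit of Zariski--Nagata,'' but the precise reason the conjecture is still open for general normal varieties.
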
 

If $\phi$ is quasi-finite, then Conjecture \ref{main_conj} holds as was shown by the second author
\cite[Theorem 1.2]{Das}. Let us denote the codimension of a subvariety $Y$ in $X$ by $\mathrm{codim}_XY$ for
notational convenience. In 
the case where $\mathrm{codim}_X Y \,\geq\, 2$, Kaliman in \cite{Kaliman} showed that the Miyanishi conjecture
holds if the variety $X$ is either affine or complete. Several other cases were considered by the
second author, \cite[Theorem 1.5 and Corollary 1.6]{Das}, with $\mathrm{codim}_XY \,\geq\, 2$. More precisely, it
was shown there that 
the Miyanishi conjecture holds if either $\phi$ is affine, or $X$ has a normalization which is 
locally factorial, or $k\,=\,\mathbb{C}$ with $X$ being a Stein algebraic variety. 
In particular, the Miyanishi conjecture is valid if $X$ is non-singular.

The goal of this article is to investigate the conjecture in some other cases. To prove \Cref{main_conj},
it suffices to establish it for normal varieties (see \Cref{normal_case_reducing}). In view of this, we stick to the case of normal varieties. 

A variety is said to be locally a complete intersection if the local ring at each point is the quotient of a regular local ring by an ideal generated by a regular sequence.

\begin{thm}\label{main_thm_codim_case}
Let $X$ be a normal algebraic variety defined over an algebraically closed field of characteristic zero, and
let $\phi$ be an endomorphism of $X$. Let $Y$ be a proper closed algebraic subset of $X$ of codimension
at least $2$ such that the restriction $$\phi\big\vert_{X \setminus Y}\ : \ X \setminus Y \ \longrightarrow \ X$$ is injective.
Assume that at least one of the following statements holds:
\begin{enumerate}[label=(\roman*)]
\item $X$ is an algebraic surface.
\item $X$ is non-singular.
\item $X$ is locally a complete intersection and it is regular in codimension $2$.
\end{enumerate}
Then $\phi$ is an automorphism.
\end{thm}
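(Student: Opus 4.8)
The plan is to reduce, in all three cases, to the quasi-finite case already settled in \cite{Das} by showing that the hypotheses force $\phi$ to have only finite fibres. Having reduced to $X$ normal (via \Cref{normal_case_reducing}), I would first record two soft consequences of the injectivity of $\phi|_{X\setminus Y}$. Since $X\setminus Y$ is dense and $\phi$ is injective on it, the image $\phi(X\setminus Y)$ is a constructible set of dimension $\dim X$, so $\phi$ is dominant; and generic injectivity in characteristic zero forces the function field extension $\phi^*k(X)\subseteq k(X)$ to have degree one, so $\phi$ is birational. Thus $\phi$ is a dominant birational endomorphism of a normal variety, and it suffices to prove that $\phi$ is quasi-finite: once this is known, the quasi-finite case of \Cref{main_conj}, namely \cite{Das}, yields that $\phi$ is an automorphism.

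The next observation localises the obstruction to quasi-finiteness inside $Y$. If $C$ is a positive-dimensional irreducible component of some fibre $\phi^{-1}(x)$, then $C\setminus Y$ maps under the injective map $\phi|_{X\setminus Y}$ to the single point $x$, forcing $C\setminus Y$ to be at most a point; hence $C\subseteq Y$. Therefore the exceptional set $E$, defined as the locus of points lying on a positive-dimensional fibre, is contained in $Y$, and so $\mathrm{codim}_X E\ge 2$. In case (i) this already finishes the argument: a surface has $\dim Y=0$, so $Y$ is finite and can contain no positive-dimensional $C$; thus $E=\varnothing$, $\phi$ is quasi-finite, and we are done.

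For case (ii) I would prove the ``purity'' statement that $E$ is either empty or pure of codimension one, which together with $\mathrm{codim}_X E\ge 2$ forces $E=\varnothing$. Here the K\"ahler differentials enter. Since $X$ is non-singular and $\phi$ is generically \'etale (being birational in characteristic zero), the map $\phi^*\Omega_X\to\Omega_X$ of rank-$\dim X$ vector bundles has a determinant, a section of a line bundle, whose zero locus $R$ is the ramification locus; as $\phi$ is not everywhere ramified, $R$ is empty or a Cartier divisor, hence pure of codimension one. I claim $E=R$. Away from $R$ the differential $d\phi$ is an isomorphism, so $\phi$ is \'etale and in particular quasi-finite there, giving $E\subseteq R$; conversely, at the generic point of a component of $R$ the map $\phi$ is ramified, hence cannot be a local isomorphism, so by Zariski's Main Theorem (a quasi-finite birational morphism to a normal variety is an open immersion) its fibre cannot be finite, giving $R\subseteq E$. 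Thus $E=R$ is pure of codimension one or empty, and being contained in $Y$ it must be empty.

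Case (iii) is the main obstacle, and it is where the locally-complete-intersection and regularity-in-codimension-$2$ hypotheses are used. The difficulty is that $\Omega_X$ is no longer locally free, so the determinant of $\phi^*\Omega_X\to\Omega_X$ does not directly define a Cartier divisor. On the regular locus the previous argument applies verbatim, and since $X$ is regular in codimension $2$ its singular locus has codimension at least $3$; this already shows that $E$ can have no component of codimension $2$, so any nonempty $E$ would be concentrated, together with its image, in the singular locus. To exclude this I would invoke purity of the branch (equivalently, exceptional) locus for the birational morphism $\phi$: the locally-complete-intersection hypothesis makes $X$ Cohen--Macaulay (hence $S_3$) and Gorenstein, so that the conormal sequence presents $\Omega_X$ with a two-term locally free resolution and the dualizing sheaf $\omega_X$ is invertible, which is exactly the input needed for a Grothendieck-type purity theorem under the combined $R_2+S_3$ condition to conclude that the exceptional locus of $\phi$ is pure of codimension one. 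As before, purity together with $\mathrm{codim}_X E\ge 2$ forces $E=\varnothing$, so $\phi$ is quasi-finite and \cite{Das} finishes the proof. The technical heart of the argument is thus the verification of this purity statement for locally complete intersections that are regular in codimension two.
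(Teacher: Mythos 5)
Your cases (i) and (ii) are correct. Case (i) coincides with the paper's argument. For case (ii) the paper's primary proof is different --- it extends the isomorphism $\alpha\colon\phi^*\Omega_X\to\Omega_X$ from $U$ across $Y$ using reflexivity of locally free sheaves (\Cref{isomorphism_of_vector_bundles_lemma}) and deduces that $\phi$ is unramified, hence quasi-finite --- but your determinant-plus-Zariski's-Main-Theorem purity argument is sound, and it is in substance the paper's alternative proof (\Cref{open immersion result} and \Cref{non-singular case remark}, which rest on Zariski--van der Waerden purity). Reducing to the quasi-finite case via \cite[Theorem 1.2]{Das} instead of via \Cref{quasi-finite to automorphism} is an immaterial difference.

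Case (iii), however, has a genuine gap at precisely the step you call the technical heart. The purity theorem you invoke does not exist: lci $+$ Gorenstein $+$ Cohen--Macaulay $+\,(\mathrm{R}_2)$ on the target does \emph{not} imply that the exceptional locus of a birational morphism is pure of codimension one. Counterexample: the threefold node $X=\{xy=zw\}\subset\mathbb{A}^4$ is a hypersurface (hence lci, Gorenstein, Cohen--Macaulay) whose singular locus is the origin (so $(\mathrm{R}_2)$ holds), and the small resolution $\widetilde{X}\to X$ obtained by blowing up the Weil divisor $\{x=z=0\}$ is a birational morphism from a smooth threefold whose exceptional locus is a single $\mathbb{P}^1$, of codimension $2$. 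Purity of the exceptional locus genuinely requires a factoriality-type hypothesis on the target (locally factorial or $\mathbb{Q}$--factorial); this is why those hypotheses, and not lci, appear in van der Waerden purity and in \Cref{submersive lemma}. Note also that purity of the \emph{branch} locus (Zariski--Nagata--Grothendieck) is not ``equivalently'' purity of the exceptional locus: the former concerns quasi-finite morphisms, and quasi-finiteness is exactly what you are trying to establish, so appealing to it would be circular.

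The missing idea is the extra rigidity the paper extracts from $\phi$ being an endomorphism injective off codimension $\geq 2$, which is what distinguishes $\phi$ from a small resolution. Dualizing $\alpha$ gives $d\phi\colon\Omega_X^{\vee}\to(\phi^*\Omega_X)^{\vee}$, a map of \emph{reflexive} sheaves that is an isomorphism on $U$, hence an isomorphism everywhere by \Cref{isomorphism_of_vector_bundles_lemma}; that is, $\phi$ is submersive in K\"{a}llstr\"{o}m's sense (\Cref{submersive endomorphism lemma}) --- a condition that fails for small resolutions. K\"{a}llstr\"{o}m's purity theorem \cite[Corollary 4.9]{Kallstrom} for submersive birational morphisms onto d.c.i.\ (in particular lci) targets then shows that $\phi\vert_W$ is \'{e}tale, whence an automorphism of $W$ (\Cref{automorphism restriction result}). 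Even after that, concluding that $\phi$ itself is unramified requires showing that $\phi^*\Omega_X$ is reflexive, so that the isomorphism $\alpha\vert_W$ propagates across $S$ (which has codimension $\geq 3$): this uses \Cref{injectivity of stalk maps}, \Cref{projective dimension lemma} and \Cref{reflexivity lemma} (transferring $\mathrm{pd}\leq 1$ from \cite[Corollary 9.4]{Kunz_Kahler_differentials} along the injective stalk maps, then Auslander--Buchsbaum, using that lci implies Cohen--Macaulay), together with Lipman's theorem that $(\mathrm{R}_2)$ makes $\Omega_X$ reflexive \cite{Lipman}. Your proposal would need to be rebuilt along these lines; the purity principle it rests on is false.
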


As discussed above, the Miyanishi conjecture is known to be true for non-singular varieties (see 
\cite[Corollary 1.6]{Das}). But the proof in \cite{Das} uses analytic methods. In contrast, all the results
here are proved algebro-geometrically.

\subsection*{Endomorphisms restricted to non-singular locus}
Next, we consider a necessary condition for the Miyanishi conjecture. If $\phi$, in \Cref{main_conj}, is an 
automorphism, its restriction to the non-singular locus of $X$ is evidently an automorphism. The following
result investigates this necessary condition under some hypotheses.

\begin{thm}\label{Automorphism of non-singular locus theorem}
Let $X$ be a normal algebraic variety defined over an algebraically closed field of characteristic zero, and
let $\phi$ be an endomorphism of $X$. Let $Y$ be a proper closed algebraic subset of $X$ of codimension at
least $c$ such that the restriction $$\phi\big\vert_{X \setminus Y}\ : \ X \setminus Y \
\longrightarrow \ X$$ is injective. Denote the non-singular locus of $X$ by $W$. Assume that at least one of the following statements holds:
\begin{enumerate}[label=(\roman*)]
\item $c \ = \ 2$, and $\phi$ is surjective.
\item $c \ = \ 2$, and $X$ is a $\mathbb{Q}$--factorial variety.
\item $c \ = \ 2$, and $X$ is locally a complete intersection.
\item $c \ = \ 3$.
\end{enumerate}
Then $\phi\ (W) \ \subseteq \ W$, and the restriction $$\phi\big\vert_{W}\ : \ W \ \longrightarrow \ W$$ is an automorphism of $W$.
\end{thm}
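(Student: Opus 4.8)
The overall strategy is to reduce the statement, via a ramification computation on the smooth locus, to the single assertion that $\phi(W)\subseteq W$, and then to invoke the theorem of Ax and Nowak. I would begin by recording two structural facts about $\phi$. First, since $\operatorname{codim}_X Y\geq c\geq 2$, the injectivity of $\phi|_{X\setminus Y}$ shows that $\phi$ is generically one-to-one; if $\phi$ failed to be dominant its generic fibres would be positive-dimensional, contradicting this generic injectivity, so $\phi$ is dominant. In characteristic zero a dominant, generically injective self-map induces an isomorphism $\phi^{\ast}\colon k(X)\to k(X)$, whence $\phi$ is birational. Second, $\phi$ contracts no prime divisor: if $E\subset X$ were a prime divisor with $\dim\overline{\phi(E)}<\dim E$, then, as $\operatorname{codim}_X E=1<c$, the dense open subset $E\setminus Y$ would meet a positive-dimensional fibre of $\phi$ in at least two points, all lying in $X\setminus Y$ and sharing one image, contradicting the injectivity of $\phi|_{X\setminus Y}$.

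Next I would localize to the smooth locus. Set $V:=W\cap\phi^{-1}(W)$, an open subset of $W$ whose complement has codimension at least two in $W$ (since $\operatorname{codim}_X\operatorname{Sing}X\geq 2$ by normality). On $V$ the map $\phi|_V\colon V\to W$ is a birational morphism of smooth varieties, so by purity its non-isomorphism locus is either empty or a contracted divisor; by the previous paragraph there is no contracted divisor, hence $\phi|_V$ is étale. A birational étale morphism is an open immersion, so $\phi|_V\colon V\hookrightarrow W$ is an open embedding. Granting the crux $\phi(W)\subseteq W$ for the moment, we get $W\subseteq\phi^{-1}(W)$ and therefore $V=W$; thus $\phi|_W\colon W\to W$ is an open immersion, in particular an injective endomorphism of the variety $W$, and the Ax--Nowak theorem \cite{Ax,Nowak} applied to $W$ yields that $\phi|_W$ is an automorphism, as desired.

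It remains to prove the crux, that no smooth point of $X$ is carried into $\operatorname{Sing}X$; this is where the four hypotheses enter and where I expect the real difficulty. Arguing by contradiction, suppose $w\in W$ with $x_0:=\phi(w)\in\operatorname{Sing}X$, and consider the induced local homomorphism $\phi^{\#}\colon\cO_{X,x_0}\to\cO_{X,w}$ of $n$-dimensional normal local domains with common fraction field $k(X)$, the target being regular. The goal is to force $\cO_{X,x_0}$ to be regular, contradicting $x_0\in\operatorname{Sing}X$; the tool is Zariski--Nagata purity of the branch locus, which upgrades ``unramified in codimension one'' to ``étale'' once enough regularity and flatness are available. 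Each hypothesis supplies the missing input: in case (iii) the local-complete-intersection condition makes $X$ Gorenstein, so $\omega_X$ is invertible and the different of $\phi$ is a genuine effective Cartier divisor whose class we have already seen to vanish in codimension one, while the Cohen--Macaulay property supplies the flatness needed to invoke purity; in case (ii) $\mathbb{Q}$-factoriality makes some multiple of every Weil divisor Cartier, so the ramification comparison can be carried out with honest line bundles and their pullbacks; in case (i) surjectivity of $\phi$, combined with the absence of contracted divisors, pins down the image and lets one deduce the finiteness required to apply purity directly; and in case (iv) the stronger bound $c=3$ places the offending locus in codimension at least three, giving the extra room needed for the reflexive extension and purity arguments to close.

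The step I expect to be the main obstacle is precisely this last one, ruling out that a smooth point maps into the singular locus. The subtlety is that a birational self-map which is an isomorphism in codimension one can, a priori, behave like a small crepant modification and send smooth points into $\operatorname{Sing}X$; the delicate point in each case is to produce the flatness, or the divisor-theoretic control, that turns the codimension-one étaleness established above into genuine étaleness at $w$, whence regularity of $\cO_{X,x_0}$ would follow by descent. Making this purity input rigorous across the singularities of $X$, uniformly under the disparate hypotheses (i)--(iv), is the heart of the argument.
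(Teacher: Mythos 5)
Your first two paragraphs are essentially sound and reproduce the paper's preliminary step (\Cref{rem1} and \Cref{open immersion result}): birationality via Zariski's Main Theorem, no contracted divisors, and van der Waerden purity over the smooth target forcing $\phi$ to be an open immersion there. Two remarks on this part. First, the paper applies purity to all of $\phi^{-1}(W)\to W$ (the source is allowed to be singular; only smoothness of the target is used), which yields the stronger conclusion $\phi^{-1}(W)\cap \operatorname{Sing}X=\emptyset$; your restriction to $V=W\cap\phi^{-1}(W)$ loses this, and it is needed below. Second, your parenthetical justification that $W\setminus V$ has codimension at least $2$ does not follow from normality alone; it needs your no-contracted-divisor observation applied to $\phi^{-1}(\operatorname{Sing}X)$.

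The genuine gap is the crux itself: $\phi(W)\subseteq W$ is exactly what the four hypotheses are for, and your third paragraph proves it in none of the cases. You name Zariski--Nagata purity of the branch locus as the tool, but that theorem requires a quasi-finite (indeed finite, or at least flat) morphism, and quasi-finiteness of $\phi$ near a hypothetical $w\in W$ with $\phi(w)\in\operatorname{Sing}X$ is precisely what is not known --- establishing quasi-finiteness is essentially the whole content of the problem (cf.\ \Cref{quasi-finite to automorphism}). In particular, the ``different of $\phi$'' is undefined for a possibly non-finite map, Gorenstein or Cohen--Macaulay hypotheses supply no flatness for a birational non-flat map, and nothing in your sketch rules out a positive-dimensional fibre through $w$ lying inside $Y$. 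The paper's actual mechanism is different: it first shows $\phi$ is \emph{submersive} in K\"allstr\"om's sense --- the dual map $d\phi\colon \Omega_X^{\vee}\to \left(\phi^*(\Omega_X)\right)^{\vee}$ is an isomorphism off a codimension-$2$ set and both sheaves are reflexive, hence it is an isomorphism globally (\Cref{submersive endomorphism lemma}) --- and then invokes K\"allstr\"om's purity of the \emph{critical} locus, which is designed precisely to avoid finiteness hypotheses: \cite[Corollary 4.9]{Kallstrom} for locally complete intersection or $\mathbb{Q}$--factorial targets handles cases (ii) and (iii), giving that $\phi\big\vert_W\colon W\to X$ is \'{e}tale, while \cite[Theorem 4.6]{Kallstrom} in case (iv) bounds the codimension of the ramification locus by $1$, which the hypothesis $c=3$ (making the stalk maps isomorphisms in codimension up to $2$) then forces to be empty; in either case $\phi\big\vert_W$ is an open immersion with smooth, hence $W$-contained, image, and the theorem of Ax concludes. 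Case (i) needs none of this: with the open-immersion statement on all of $\phi^{-1}(W)$, surjectivity makes $\phi^{-1}(W)\to W$ an isomorphism whose inverse is an injective endomorphism of $W$, and Ax closes the case --- so your plan for (i) (``finiteness required to apply purity directly'') is off-track. In short, the reduction is right, but the heart of the theorem is missing, and the classical purity you propose in its place would not apply.
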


Assume that the endomorphism $\phi$ of $X$ restricts to an automorphism of
the smooth locus of $X$. Under this assumption, we show that the Miyanishi conjecture holds for varieties with only isolated singularities.

\begin{thm}\label{isolated singularity theorem}
Let $X$ be a normal algebraic variety, defined over an algebraically closed field of characteristic zero,
which has only finitely many singular points, and 
let $\phi$ be an endomorphism of $X$. Let $Y$ be a proper closed algebraic subset of $X$ of codimension
at least $2$ such that the restriction $$\phi\big\vert_{X \setminus Y}\ : \ X \setminus Y \ \longrightarrow \ X$$ is injective. Denote the
non-singular locus of $X$ by $W$. 
Assume that the restriction of $\phi$ to $W$ induces an automorphism of $W$. Then $\phi$ is an automorphism.
\end{thm}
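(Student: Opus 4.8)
The plan is to show that the stated hypotheses force $\phi$ to be quasi-finite, and then to invoke the already-established quasi-finite case of the Miyanishi conjecture, namely \cite[Theorem 1.2]{Das}. First I would dispose of the low-dimensional situation: if $\dim X \le 1$, then normality forces $X$ to be non-singular, so $W = X$ and $\phi = \phi|_W$ is an automorphism by hypothesis. Hence I may assume $\dim X \ge 2$ and write $S := X \setminus W$ for the singular locus, which by assumption is a finite set of points.

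The heart of the argument is the claim that $\phi$ is quasi-finite, i.e., that every fibre $\phi^{-1}(q)$, for $q \in X$, is finite. To see this, I would decompose $\phi^{-1}(q) = (\phi^{-1}(q) \cap W) \sqcup (\phi^{-1}(q) \cap S)$. The second piece is contained in the finite set $S$. For the first piece, I would use that $\phi|_W \colon W \to W$ is an automorphism, hence a bijection of $W$ onto $W$: if $q \in W$ then $\phi^{-1}(q) \cap W$ is the single point $(\phi|_W)^{-1}(q)$, while if $q \in S$ then $\phi^{-1}(q) \cap W = \emptyset$, since $\phi(W) = W$ does not contain $q$. In either case $\phi^{-1}(q) \cap W$ has at most one point, so $\phi^{-1}(q)$ is finite. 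This is exactly the step where the finiteness of the singular locus is essential, and it is the crux of the proof: for a positive-dimensional singular locus the fibres over singular points could be positive-dimensional, and quasi-finiteness would fail.

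With quasi-finiteness in hand, the restriction $\phi|_{X \setminus Y}$ is injective for the given proper closed subset $Y$, so $\phi$ meets the quasi-finite hypothesis of \Cref{main_conj}, and \cite[Theorem 1.2]{Das} then yields that $\phi$ is an automorphism. If one prefers a self-contained conclusion, I would argue instead as follows: since $\phi|_W$ is an isomorphism onto the dense open set $W$, the morphism $\phi$ is birational, and being also quasi-finite it factors, by Zariski's Main Theorem, as an open immersion followed by a finite morphism $g$; as $g$ is birational onto the normal variety $X$ it is an isomorphism, so $\phi$ is itself an open immersion and in particular globally injective, whereupon the theorem of Ax and Nowak (\cite{Ax}, \cite{Nowak}) shows that $\phi$ is an automorphism. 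In either route the only real obstacle is the quasi-finiteness step; everything after it is either a direct citation or a routine application of standard structure theorems.
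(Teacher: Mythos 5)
Your proposal is correct and takes essentially the same approach as the paper: the paper's proof establishes quasi-finiteness by the identical decomposition (the restriction $\phi\big\vert_{W}$ is quasi-finite since it is an automorphism, and $X \setminus W$ is finite), and then invokes \Cref{quasi-finite to automorphism}, whose proof (birationality, Zariski's Main Theorem to get an open immersion, then the theorem of Ax) is precisely your ``self-contained'' second route. The only differences are cosmetic: the paper is terser, and your alternative finish via the quasi-finite case of the Miyanishi conjecture \cite[Theorem 1.2]{Das} is an equally valid citation.
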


Theorem \ref{Automorphism of non-singular locus theorem} and Theorem \ref{isolated singularity theorem}
together imply the following:

\begin{cor}\label{cor1}
Let $X$ be a normal algebraic variety, defined over an algebraically closed field of characteristic zero,
which has only finitely many singular points, and let $\phi$ be an endomorphism of $X$.
Let $Y$ be a proper closed algebraic subset of $X$ of codimension
at least $2$ such that the restriction $$\phi\big\vert_{X \setminus Y}\ : \ X \setminus Y \ \longrightarrow \ X$$ is injective.
Assume that at least one of the four conditions in Theorem \ref{Automorphism of non-singular locus theorem}
holds. Then $\phi$ is an automorphism.
\end{cor}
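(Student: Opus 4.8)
The plan is to derive the corollary by simply chaining together the two preceding theorems, since the conclusion of Theorem \ref{Automorphism of non-singular locus theorem} supplies precisely the additional hypothesis demanded by Theorem \ref{isolated singularity theorem}. First I would record that the standing hypotheses of the corollary---that $X$ is normal with only finitely many singular points, that $Y$ is a proper closed subset with $\mathrm{codim}_X Y \geq 2$, and that $\phi|_{X \setminus Y}$ is injective---are exactly those common to both theorems, so nothing needs to be re-established on that front.

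Next I would invoke Theorem \ref{Automorphism of non-singular locus theorem}. By hypothesis one of its four conditions holds; each carries its own codimension requirement ($c = 2$ for (i)--(iii) and $c = 3$ for (iv)), and in each case this is compatible with the standing assumption $\mathrm{codim}_X Y \geq 2$. Applying that theorem yields $\phi(W) \subseteq W$ together with the fact that the restriction $\phi|_W : W \longrightarrow W$ is an automorphism of the non-singular locus $W$.

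Finally I would feed this output into Theorem \ref{isolated singularity theorem}, whose hypotheses are that $X$ is normal with finitely many singular points, that $Y$ has codimension at least $2$, that $\phi|_{X \setminus Y}$ is injective, and that $\phi$ restricts to an automorphism of $W$. The first three are part of the corollary's hypotheses, while the fourth is exactly what the previous step produced. Theorem \ref{isolated singularity theorem} then gives that $\phi$ is an automorphism, completing the argument.

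Since the corollary is a direct composition of the two theorems, I do not expect any substantive obstacle; the only point requiring care is the bookkeeping of the codimension of $Y$. One must verify that the value of $c$ implicit in whichever of the four conditions is assumed is consistent with---and, in the case of (iv), strictly stronger than---the blanket hypothesis $\mathrm{codim}_X Y \geq 2$, and in particular that Theorem \ref{isolated singularity theorem}, which needs only codimension $2$, remains applicable throughout.
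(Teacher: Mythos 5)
Your proposal is correct and is exactly the paper's argument: the paper itself states that Theorem \ref{Automorphism of non-singular locus theorem} and Theorem \ref{isolated singularity theorem} together imply the corollary, i.e.\ first obtain the automorphism of the non-singular locus $W$ and then feed it into the isolated-singularity theorem. Your added remark on the codimension bookkeeping (that condition (iv) forces $\mathrm{codim}_X Y \geq 3$, which is compatible with, and stronger than, the codimension $2$ needed for Theorem \ref{isolated singularity theorem}) is a fine point the paper leaves implicit.
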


The structure of the paper is as follows. In \Cref{section prilim}, we begin by recalling and establishing a 
few general results concerning endomorphisms of varieties, which will be utilized in the subsequent sections. 
The first two cases of Theorem \ref{main_thm_codim_case} are addressed in \Cref{section first two case} 
(\Cref{th1}). \Cref{Automorphism of non-singular locus theorem} and \Cref{isolated singularity theorem} are 
presented in \Cref{section four} (\Cref{isolated sing section case} and \Cref{automorphism restriction 
result}). The discussions in \Cref{section fourth case} primarily involve homological algebra related to 
various K\"{a}hler differential sheaves. The third case of \Cref{main_thm_codim_case} is addressed in \Cref{section 
fourth case} (\Cref{th3}).

\noindent\textbf{Notation:}\,\, By an algebraic variety over $k$, we mean an integral separated scheme $X$ 
of finite type over $k$. The structure sheaf of $X$ is denoted by $\mathcal{O}_X$. For a sheaf $\mathcal{F}$ 
on a topological space $X$, the ring associated to an open subset $U$ of $X$ is denoted by $\Gamma\left( 
U,\,\mathcal{F}\right)$; the stalk of $\mathcal F$ at a point $x\,\in\, X$ is denoted by $\mathcal{F}_x$. We 
denote the relative K\"{a}hler differential sheaf of a regular morphism $X \ \longrightarrow \ Y$ by 
$\Omega_{X/Y}$, and the K\"{a}hler differential sheaf of an algebraic variety $X/k$ is denoted by $\Omega_X$. 
All the rings are assumed to be commutative Noetherian with unity, and all the modules are assumed to be 
finitely generated.

\section{Preliminaries}\label{section prilim} 

We will put down a few results that will be useful in the proof of the main theorem.
The first step is to reduce the Miyanishi conjecture to the case of normal varieties. 

\begin{lmm}\label{normal_case_reducing}
If the Miyanishi conjecture is valid for normal algebraic varieties, then it is valid in full generality.
\end{lmm}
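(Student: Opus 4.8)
The plan is to pass to the normalization and descend. Let $X$ be an arbitrary variety with $\phi$ and $Y$ as in \Cref{main_conj}. Since $\phi|_{X\setminus Y}$ is injective on a dense open set, $\phi$ is dominant and generically one-to-one, hence (in characteristic zero) birational; in particular $\phi^*$ is an automorphism of the function field $k(X)$. Let $\nu\colon \widetilde X\to X$ be the normalization. By functoriality of normalization for dominant morphisms, $\phi$ lifts to an endomorphism $\widetilde\phi\colon\widetilde X\to\widetilde X$ with $\nu\circ\widetilde\phi=\phi\circ\nu$, and $\widetilde\phi$ is again birational. The strategy is to produce a proper closed $\widetilde Y\subseteq\widetilde X$ for which $(\widetilde X,\widetilde\phi,\widetilde Y)$ satisfies the hypotheses of \Cref{main_conj}, apply the assumed normal case to conclude that $\widetilde\phi$ is an automorphism, and finally descend this conclusion to $\phi$.

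The first useful observation is that every positive-dimensional irreducible component $Z$ of a fibre of $\phi$ must lie in $Y$: otherwise $Z\setminus Y$ would be an infinite set on which $\phi$ is constant, contradicting injectivity off $Y$. I will use this to control $\widetilde Y$. Set $\widetilde Y:=\nu^{-1}(Y)\cup \overline{B}$, where $B\subseteq\widetilde X$ is the locus (constructible, by Chevalley's theorem) of points $\widetilde x$ for which there exists $\widetilde x'\neq\widetilde x$ with $\nu(\widetilde x')=\nu(\widetilde x)\notin Y$ and $\widetilde\phi(\widetilde x')=\widetilde\phi(\widetilde x)$; by construction $\widetilde\phi$ is injective on $\widetilde X\setminus\widetilde Y$.

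The main step is the codimension $\ge 2$ case, where verifying $\mathrm{codim}_{\widetilde X}\widetilde Y\ge 2$ is the crux. As $\nu$ is finite, $\nu^{-1}(Y)$ has codimension $\ge 2$, so it suffices to show that $\overline B$ contains no prime divisor; by constructibility of $B$ it is enough to rule out that the generic point of a divisor is \emph{bad}. So let $\widetilde x_1\neq\widetilde x_2$ be codimension-one points with $\nu(\widetilde x_1)=\nu(\widetilde x_2)=x\notin Y$ and $\widetilde\phi(\widetilde x_1)=\widetilde\phi(\widetilde x_2)=:\widetilde z$. If $\widetilde z$ is again a codimension-one point, then $\mathcal O_{\widetilde X,\widetilde z}\subseteq\mathcal O_{\widetilde X,\widetilde x_i}$ is a local inclusion of discrete valuation rings inside $k(\widetilde X)$, hence an equality; since distinct prime divisors of the normal variety $\widetilde X$ define distinct valuations, this forces $\widetilde x_1=\widetilde x_2$, a contradiction. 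If instead $\widetilde z$ has codimension $\ge 2$, then $\widetilde\phi$ contracts the divisor $\overline{\{\widetilde x_1\}}$, so $\phi$ contracts the divisor $D:=\overline{\{x\}}$; the fibres of $\phi|_D$ are then positive-dimensional, lie in $Y$ by the observation above, and sweep out a dense subset of $D$, forcing $D\subseteq Y$ and contradicting $x\notin Y$. Thus $\overline B$ has codimension $\ge 2$ and the hypotheses of \Cref{main_conj} hold for $(\widetilde X,\widetilde\phi,\widetilde Y)$. In the quasi-finite case one argues more simply: $\widetilde\phi$ is quasi-finite because $\nu\circ\widetilde\phi=\phi\circ\nu$ is, and one may take $\widetilde Y=\nu^{-1}(Y\cup C)$ with $C$ the non-normal locus, invoking the quasi-finite clause.

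Applying the assumed normal case now yields that $\widetilde\phi$ is an automorphism of $\widetilde X$. To descend, note that $\nu\circ\widetilde\phi=\phi\circ\nu$ is then finite and surjective; since $\nu$ is surjective, $\phi$ is quasi-finite, and properness descends along the surjection $\nu$ (as $\phi\circ\nu$ is proper with $\phi$ separated), so $\phi$ is finite. Being also birational, $\phi$ is a finite birational self-map of the integral variety $X$, and such a map is necessarily an isomorphism: iterating $\phi$, the ascending chain of finite $\mathcal O_X$-subalgebras $(\phi^n)_*\mathcal O_X$ of $k(X)$ stabilizes inside the integral closure, which forces $\phi_*\mathcal O_X=\mathcal O_X$. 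Hence $\phi$ is an automorphism, completing the reduction. The hard part is the main step above: showing that normalization, which in general separates branches along a divisor, nonetheless cannot destroy injectivity of $\widetilde\phi$ outside codimension two—this is exactly where the valuation-theoretic rigidity at codimension-one points and the ``positive-dimensional fibres lie in $Y$'' constraint are both indispensable.
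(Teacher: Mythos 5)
Your overall strategy --- lift $\phi$ to the normalization, verify the hypotheses of \Cref{main_conj} for $(\widetilde X,\widetilde\phi,\widetilde Y)$, and descend --- is the standard one (the paper itself gives no proof of this lemma, deferring to Kaliman's Lemma~2 and Das's Lemma~2.2, which proceed along these lines), and most of your steps are sound: the lift exists, the quasi-finite clause works, your Case~1 (two distinct codimension-one points, valuation-ring rigidity) and Case~2 (contracted divisor) are both correct, and the descent via the stabilizing chain of finite subalgebras $(\phi^n)_*\mathcal{O}_X$ inside the integral closure is a complete argument. The genuine gap is in the reduction ``by constructibility of $B$ it is enough to rule out that the generic point of a divisor is \emph{bad}.'' Your $B$ is defined pointwise (``there exists $\widetilde x'\neq\widetilde x$ with $\ldots$''), but the constructible set Chevalley actually hands you is the first projection of the off-diagonal incidence correspondence $F'\subseteq\widetilde X\times_X\widetilde X$, and a point of $F'$ lying over the generic point $\eta$ of a divisor $\widetilde D\subseteq\widetilde X$ need \emph{not} have distinct projections: it can be an off-diagonal point of $\mathrm{Spec}\bigl(\kappa(\eta)\otimes_{\kappa(\nu(\eta))}\kappa(\eta)\bigr)$ whose two projections are both equal to $\eta$. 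Concretely, this occurs when a single prime divisor upstairs maps generically $2$-to-$1$ onto the non-normal divisor below, as for the normalization $\nu:\mathbb{A}^2\longrightarrow X=\{x^2=y^2z\}$ of the Whitney umbrella, $(u,v)\longmapsto(uv,v,u^2)$: the fibre of $\nu$ over the generic point of the $z$-axis is the \emph{single} point $\eta$ of $\{v=0\}$, with residue extension $k(u)/k(u^2)$, so $\eta$ is not bad in your sense, yet the closed bad pairs $\bigl((u,0),(-u,0)\bigr)$ are dense in $\{v=0\}$ and could a priori make $\overline B$ contain a divisor. Your case analysis ``let $\widetilde x_1\neq\widetilde x_2$ be codimension-one points'' never sees this same-divisor self-identification, so the step you yourself flag as the crux is incomplete as written.

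The good news is that the missing case can be excluded by a residue-field upgrade of your own Case~1. Suppose the bad closed pairs are dense in a prime divisor $\widetilde D$ with generic point $\eta$, and set $\widetilde z:=\widetilde\phi(\eta)$. If $\widetilde z$ has codimension one, your maximality-under-domination argument gives not merely an abstract equality of valuation rings but $\widetilde\phi^*\mathcal{O}_{\widetilde X,\widetilde z}=\mathcal{O}_{\widetilde X,\eta}$ inside $k(\widetilde X)$, hence an isomorphism of residue fields $\kappa(\widetilde z)\longrightarrow\kappa(\eta)$; therefore $\widetilde\phi\big\vert_{\widetilde D}$ is birational onto its image, which contradicts the fact that a dense set of its fibres contains two distinct closed points (equivalently: an off-diagonal witness over $\eta$ would force two distinct embeddings $\kappa(\eta)\rightrightarrows\kappa(\xi)$ to agree on the full image of $\kappa(\widetilde z)$, i.e., on all of $\kappa(\eta)$). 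If instead $\widetilde z$ has codimension at least $2$, your contraction argument applies verbatim and pushes the divisor $\nu(\widetilde D)$ into $Y$, contradicting $\mathrm{codim}_XY\geq 2$. With this additional case your proof of the codimension bound closes, and the rest of your argument goes through; without it, the proof fails exactly at the pinch-point phenomenon that normalization can introduce along a divisor.
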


The reader is referred to \cite[Lemma 2]{Kaliman} for a proof of Lemma \ref{normal_case_reducing} (see also \cite[Lemma 2.2]{Das}).

In view of Lemma \ref{normal_case_reducing}, we can assume $X$ to be a normal variety.

\begin{rem}\label{rem1}
Let $X$ be a normal variety, and let $\phi$ be an endomorphism such that the restriction
$$\phi_1\ :=\ \phi\big\vert_{X \setminus Y}\ :\ X \setminus Y \ \longrightarrow\ X$$ is injective
for some proper closed subset $Y$ of $X$. 
Therefore $\phi_1$ is birational. By Zariski's Main Theorem (cf. \cite[Chapter III.9]{Red_book}), $\phi_1$ is an open immersion. Hence the subset ${\rm Image}(\phi_1)\, \subset\, X$ is also open, and
the map
$$
\phi_1\ :\ X \setminus Y \ \longrightarrow\ {\rm Image}(\phi_1)$$
is an isomorphism.
\end{rem}

Some common notation will be used several times. Let us first make some notational conventions which will be 
used throughout the article.

\begin{convention}\label{conv1}
Let $X$ be a normal algebraic variety defined over an algebraically closed field $k$, and
let $Y$ be a closed algebraic subset of $X$. Let $$\phi\ :\ X \ \longrightarrow\ X$$
be an endomorphism of $X$ such that the restriction
$$
\phi_1\ :=\ \phi\big\vert_{X \setminus Y}\ :\ X \setminus Y \ \longrightarrow\ X$$
is an injective map. We denote the open subsets $X \setminus Y$ and $\phi (X \setminus Y)\,=\, {\rm Image}
(\phi_1)$ by $U$ and $V$, respectively. The closed subset $X \setminus {\rm Image}(\phi_1)$ is denoted by
$Z$. The singular locus of $X$ and the non-singular locus of $X$ are denoted by $S$ and $W$ respectively.
\end{convention}

Assuming Convention \ref{conv1}, we have seen in Remark \ref{rem1} that $\phi_1$ is an isomorphism
between $U$ and $V$. The next result determines the inverse image of $V$ under the map $\phi$.

\begin{lmm}\label{preimage_preserving_lemma}
Let $\phi$ be an endomorphism of $X$, and use Convention \ref{conv1}. If there are two open subsets
$U$ and $V$ of $X$ such that the restriction map
$$\phi_1\ :=\ \phi\big\vert_{U}\ :\ U \ \longrightarrow\ V$$
is an isomorphism, then $\phi^{-1}(V)\,=\,U$.
\end{lmm}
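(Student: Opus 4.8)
The inclusion $U \,\subseteq\, \phi^{-1}(V)$ is immediate: since $\phi_1\colon U \,\longrightarrow\, V$ is an isomorphism it is in particular surjective, so $\phi(U)\,=\,V$, whence $U \,\subseteq\, \phi^{-1}(V)$. The entire content of the lemma is therefore the reverse inclusion $\phi^{-1}(V)\,\subseteq\, U$. The plan is to set $T\,:=\,\phi^{-1}(V)$, which is an open subset of $X$ containing $U$, and to prove that $U$ is \emph{closed} in $T$. Since $X$ is integral, it is irreducible, so the nonempty open subset $T$ is connected; and a nonempty subset that is simultaneously open and closed in a connected space must be the whole space. As $U$ is already open in $T$ (being open in $X$ and contained in $T$), showing $U$ is also closed in $T$ will force $U\,=\,T\,=\,\phi^{-1}(V)$.

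To produce this clopen structure, I would exploit the inverse of the isomorphism $\phi_1$. Let $\psi\colon V \,\longrightarrow\, U$ denote $\phi_1^{-1}$, and compose it with the open inclusion $U \,\hookrightarrow\, T$ to regard $\psi$ as a morphism $V \,\longrightarrow\, T$. For every $v\,\in\, V$ one has $\psi(v)\,\in\, U$ and $\phi(\psi(v))\,=\,\phi_1(\psi(v))\,=\,v$, so $\psi$ is a section of the restriction $\phi\big\vert_{T}\colon T \,\longrightarrow\, V$, and its image is precisely $U$.

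The decisive step is then an appeal to separatedness. Because $X$ is separated over $k$ and $V$ is a $k$-scheme, the morphism $\phi\big\vert_{T}\colon T \,\longrightarrow\, V$ is separated, by the cancellation property for separated morphisms applied to $T \,\longrightarrow\, V \,\longrightarrow\, \mathrm{Spec}(k)$. A section of a separated morphism is a closed immersion, so $\psi(V)\,=\,U$ is closed in $T$, and the clopen argument above concludes the proof. The only delicate point—and the step I expect to require the most care—is the invocation of the fact that a section of a separated morphism is a closed immersion, together with the verification that $\phi\big\vert_{T}$ is genuinely separated; once the inverse map $\psi$ is identified as a section, everything else is formal. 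One should also record that $T$ is nonempty and connected, which holds because $X$ is an integral scheme and $T$ is a nonempty open subset.
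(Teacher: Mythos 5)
Your proof is correct and follows essentially the same route as the paper: both arguments hinge on the separatedness of $\phi\big\vert_{\phi^{-1}(V)}\colon \phi^{-1}(V) \longrightarrow V$ to show that $U$ is closed (as well as open) in $\phi^{-1}(V)$, and then conclude $U = \phi^{-1}(V)$ from the irreducibility, hence connectedness, of $\phi^{-1}(V)$. The only cosmetic difference is in how closedness is extracted: the paper shows the open immersion $\iota\colon U \hookrightarrow \phi^{-1}(V)$ is proper via the cancellation law \cite[Corollary II.4.8]{Hartshorne} applied to $\phi\big\vert_{\phi^{-1}(V)} \circ \iota = \phi_1$, whereas you observe that $\psi = \phi_1^{-1}$ is a section of the separated morphism $\phi\big\vert_{\phi^{-1}(V)}$ and hence a closed immersion --- two formulations of the same graph/diagonal argument.
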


\begin{proof}
Note that $U$ is an open subset of $\phi^{-1}(V)$. Let $\iota \, :\, U \,\hookrightarrow \,
\phi^{-1}(V)$ be the open immersion, and let
$$\phi_2\, :=\, \phi\big\vert_{\phi^{-1}(V)}\, :\, \phi^{-1}(V) \, \longrightarrow \, V$$ be the restriction of $\phi$. Thus
we have $\phi_2 \circ \iota \,= \,\phi_1$. It
follows immediately from \cite[Corollary II.4.8]{Hartshorne} that the map $\iota$ is proper. Being an open
immersion, $\iota$ can be proper only if $U$ is a connected component of $\phi^{-1}(V)$. On the other hand,
$\phi^{-1}(V)$ is irreducible, because $\phi^{-1}(V)$ an open subset of $X$ and $X$ is irreducible. In
particular, $\phi^{-1}(V)$ is connected. It now follows that $U$ and $\phi^{-1}(V)$ coincide.
\end{proof}

\begin{rem}
In the proof of Lemma \ref{preimage_preserving_lemma} it was used that the morphism $\phi_2\,:\,
\phi^{-1}(V) \,\longrightarrow \,V$ is separated. Separateness is, in fact, necessary for the Miyanishi
conjecture to hold. An example showing this is discussed in \cite[Example 3.6]{Das} and \cite{Kaliman}.
\end{rem}

Next, we recall a result of Jarden which says that the two closed algebraic subsets 
$Y$ and $Z$ of $X$ in Convention \ref{conv1} have the same dimension (see 
\cite[Theorem 1]{Jarden}).

\begin{prp}[{\cite[Theorem 1]{Jarden}}]\label{equal_dim_lemma}
Let $X$ be an algebraic variety, and let $\phi\, :\, X\, - - \rightarrow\, X$ be a rational map.
Let $Y,\, Z$ be two closed sub-varieties of $X$ such that $\phi$ is defined on $X\setminus Y$, and
the restriction $\phi\big\vert_{X \setminus Y}$ is a bijection of $X \setminus Y$
with $X \setminus Z$. Then the equality $\dim Y \, =\, \dim Z$ holds.
\end{prp}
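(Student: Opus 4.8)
The plan is to prove this by reduction modulo a prime and point counting over finite fields, via the Lang--Weil estimates; the virtue of this approach is that it is completely insensitive to the singularities of $X$ and to the fact that $X$ need not be complete, so the general (non-normal, non-proper) setting of the statement causes no trouble. First I would descend the entire situation to a finitely generated field: the variety $X$, the closed subvarieties $Y$ and $Z$, and the rational map $\phi$ are all defined over a subfield $k_0 \subseteq k$ that is finitely generated over $\mathbb{Q}$. Choosing models over a suitable finitely generated $\mathbb{Z}$-subalgebra $R \subseteq k_0$ (inverting finitely many elements as needed), I obtain schemes $\mathcal{X},\mathcal{Y},\mathcal{Z}$ of finite type over $R$ together with a morphism $\varphi\colon \mathcal{X}\setminus\mathcal{Y} \to \mathcal{X}\setminus\mathcal{Z}$ whose generic fibre is $\phi_1$. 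Since being a bijection on geometric points is a constructible condition, after shrinking $\mathrm{Spec}\,R$ I may assume that for every closed point $\mathfrak{m}\in\mathrm{Spec}\,R$, whose residue field is a finite field $\mathbb{F}_q$, the reduction $\varphi_{\mathfrak m}$ is a bijective morphism $(\mathcal{X}\setminus\mathcal{Y})_{\mathfrak m} \to (\mathcal{X}\setminus\mathcal{Z})_{\mathfrak m}$ on $\overline{\mathbb{F}}_q$-points, and that $\dim \mathcal{Y}_{\mathfrak m}=\dim Y$ and $\dim \mathcal{Z}_{\mathfrak m}=\dim Z$.

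Next I would pass to rational points over the finite fields. Fix such an $\mathfrak m$ with residue field $\mathbb{F}_q$. Because $\varphi_{\mathfrak m}$ is defined over $\mathbb{F}_q$, it commutes with the $q$-power Frobenius, hence with every power $F^m$; a Frobenius-equivariant bijection on $\overline{\mathbb{F}}_q$-points carries $F^m$-fixed points to $F^m$-fixed points bijectively, so $\varphi_{\mathfrak m}$ restricts to a bijection $(\mathcal{X}\setminus\mathcal{Y})(\mathbb{F}_{q^m}) \to (\mathcal{X}\setminus\mathcal{Z})(\mathbb{F}_{q^m})$ for every $m\ge 1$. Subtracting both cardinalities from $\#\mathcal{X}(\mathbb{F}_{q^m})$ yields the key identity $\#\mathcal{Y}(\mathbb{F}_{q^m}) = \#\mathcal{Z}(\mathbb{F}_{q^m})$ for all $m\ge 1$.

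Finally I would read off the dimensions from the growth of these counts as $m\to\infty$. By the Lang--Weil estimate, a variety $V$ over $\mathbb{F}_q$ satisfies $\#V(\mathbb{F}_{q^m}) = a_V\,q^{m\dim V} + O\big(q^{m(\dim V-1/2)}\big)$, where the leading coefficient $a_V>0$ counts the top-dimensional geometrically irreducible components. Applying this to $\mathcal{Y}_{\mathfrak m}$ and $\mathcal{Z}_{\mathfrak m}$, the identity $\#\mathcal{Y}(\mathbb{F}_{q^m})=\#\mathcal{Z}(\mathbb{F}_{q^m})$ forces the two sides to have the same exponential growth rate in $m$; were $\dim Y>\dim Z$, the left-hand side would dominate for large $m$, a contradiction. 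Hence $\dim \mathcal{Y}_{\mathfrak m}=\dim \mathcal{Z}_{\mathfrak m}$, and therefore $\dim Y=\dim Z$. The step I expect to demand the most care is the spreading-out: I must guarantee that \emph{bijectivity} of $\phi_1$ (not merely injectivity or dominance) is inherited by almost all reductions. I would control this by spreading out the set-theoretic inverse of $\phi_1$ over a dense open subset together with the loci where $\phi_1$ fails to be injective or surjective; all of these are constructible, so they are compatible with reduction for all but finitely many $\mathfrak m$. The remaining ingredients---Frobenius-equivariance and the Lang--Weil estimate---are then routine.
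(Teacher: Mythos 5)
This proposition is not proved in the paper at all: it is quoted from Jarden \cite[Theorem 1]{Jarden}, with the surrounding text pointing to \cite[Lemma 3]{Kaliman} for a proof of a special case (via algebraic de Rham homology, assuming $\phi$ regular and $\mathrm{codim}_XY \geq 2$) and to \cite[Lemma 10]{Lamy_Sebag} for an algebraic proof. So the only meaningful comparison is with the cited source, and your proposal is essentially Jarden's own method, in the Ax tradition: descend to a finitely generated field, spread out, reduce modulo primes, and count points over finite fields. Your outline is correct, and the two places you flag as delicate are the right ones; both are handled by standard facts. For the spreading out, note that injectivity of $\phi_1$ on $k$-points ($k$ algebraically closed) is equivalent to surjectivity of the diagonal $U \to U \times_{X\setminus Z} U$ (constructibility of images plus the Jacobson property reduce surjectivity to surjectivity on closed points), i.e.\ to $\phi_1$ being universally injective; since ``radicial'' and ``surjective'' both appear in the list of fibrewise properties whose locus in the base is constructible (EGA~IV, 9.6.1), and both hold at the generic point of $\mathrm{Spec}\,R$, they hold over a dense open subset, and a radicial surjective morphism of finite type $\overline{\mathbb{F}}_q$-schemes is bijective on $\overline{\mathbb{F}}_q$-points. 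In the Lang--Weil step, the leading coefficient is positive only when some top-dimensional component is geometrically irreducible and defined over the ground field, so either enlarge $\mathbb{F}_q$ to split all components of $\mathcal{Y}_{\mathfrak{m}}$ and $\mathcal{Z}_{\mathfrak{m}}$, or run the two-sided bound $c\,q^{m\dim Y} \leq \#\mathcal{Y}(\mathbb{F}_{q^m}) = \#\mathcal{Z}(\mathbb{F}_{q^m}) \leq C\,q^{m\dim Z}$ only along multiples of a suitable $m_0$ (the upper bound needs no Lang--Weil, only the elementary estimate $\#V(\mathbb{F}_{q^m}) = O(q^{m\dim V})$); this gives $\dim Y \leq \dim Z$, and equality follows by symmetry. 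With these standard repairs your proof is complete; it buys, as you note, total insensitivity to singularities and non-properness, whereas the alternative proof of \cite{Lamy_Sebag} cited by the paper stays entirely in characteristic zero by working with piecewise isomorphisms and avoids the spreading-out machinery.
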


We note that Kaliman proved \Cref{equal_dim_lemma} using algebraic de Rham homology under the additional 
assumptions that $\phi$ is regular and $\mathrm{codim }_XY\, \geq \, 2$ (see \cite[Lemma 3]{Kaliman}). 
See \cite[Lemma 10]{Lamy_Sebag} for an algebraic proof of Proposition \ref{equal_dim_lemma}.

While proving \Cref{main_thm_codim_case}, we show that $\phi$ in \Cref{main_thm_codim_case} is quasi-finite in 
each of the cases, and then we show that it is an automorphism. To avoid repetition of the arguments every 
time, the following lemma is formulated.

\begin{lmm}\label{quasi-finite to automorphism}
Let $X, \, Y$ and $\phi$ be as in Convention \ref{conv1}. If $\phi$ is quasi-finite, then it is an automorphism.
\end{lmm}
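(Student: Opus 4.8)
The plan is to upgrade the quasi-finiteness of $\phi$ to the much stronger statement that $\phi$ is an open immersion, and then invoke the theorem of Ax and Nowak to finish. The starting observation, already recorded in \Cref{rem1}, is that the restriction $\phi_1\colon U \to V$ is an isomorphism between the dense open subsets $U = X \setminus Y$ and $V$ of $X$; hence $\phi$ is a \emph{birational} endomorphism of $X$. Since $X$ is a separated scheme of finite type over $k$, the $k$-morphism $\phi$ is automatically separated and of finite type.

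Next I would apply Zariski's Main Theorem in Grothendieck's form (\cite{EGA-IV}; see also \cite[Chapter III.9]{Red_book}): a separated quasi-finite morphism to a Noetherian scheme factors as an open immersion followed by a finite morphism. Writing $\phi = \pi \circ j$ with $j\colon X \hookrightarrow T$ an open immersion and $\pi\colon T \to X$ finite, I would replace $T$ by the reduced closure of the dense open subscheme $j(X) \cong X$, so that $T$ becomes a variety, $j$ remains a birational open immersion, and $\pi$ stays finite. Because $\phi$ and $j$ are both birational, $\pi$ is birational as well.

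The key structural input is then normality: a finite birational morphism $\pi\colon T \to X$ onto a normal variety $X$ is an isomorphism, since $\pi_*\mathcal{O}_T$ is a coherent $\mathcal{O}_X$-subalgebra of the function field that is integral over $\mathcal{O}_X$, and normality forces it to coincide with $\mathcal{O}_X$. Consequently $\phi = \pi \circ j$ is, up to the isomorphism $\pi$, the open immersion $j$; in particular $\phi$ is injective on all of $X$. Applying the theorem of Ax and Nowak (\cite{Ax}, \cite{Nowak}), which asserts that an injective endomorphism of a variety over $k$ is an automorphism, then yields the conclusion. Alternatively, once injectivity is established, one may simply invoke \cite[Theorem 1.2]{Das}.

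The main obstacle I anticipate is the careful bookkeeping in the second step: one must ensure that the intermediate scheme $T$ produced by Zariski's Main Theorem can be taken to be an integral variety, so that the finite-birational-to-normal criterion applies, and one must confirm that $j$ really is birational (equivalently, dominant with dense image) so that $\pi$ inherits birationality. The remaining steps are formal.
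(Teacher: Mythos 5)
Your proposal is correct and follows essentially the same route as the paper: establish birationality via \Cref{rem1}, use Zariski's Main Theorem to conclude that the quasi-finite birational $\phi$ is an open immersion (the paper cites the classical form for normal targets from \cite[Chapter III.9]{Red_book}, while you re-derive it from Grothendieck's factorization together with the fact that a finite birational morphism onto a normal variety is an isomorphism), and then apply the theorem of Ax and Nowak. The only difference is that you unpack the normality hypothesis explicitly where the paper absorbs it into the cited version of ZMT.
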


\begin{proof}
In \Cref{rem1}, it was observed that $\phi$ is birational. Given that $\phi$ is quasi-finite,
by Zariski's Main Theorem (cf. \cite[Chapter III.9]{Red_book}), the map $\phi$ is an open immersion.
Thus $\phi$ is injective. Hence
it is an automorphism by the theorem of Ax.
\end{proof}

The next proposition will be useful in the proof of the main theorem.

\begin{prp}\label{injectivity of stalk maps}
Let $X, \, Y$ and $\phi$ be as in Convention \ref{conv1}, with $\mathrm{codim}_XY\, \geq \, 2$. Then for
each closed point $x \,\in\, X$, the induced ring homomorphism
\begin{align}\label{stalk_map}
\phi_x\ :\ \mathcal{O}_{X,\, \phi\,(x)} \ \longrightarrow \ \mathcal{O}_{X,\,x}
\end{align}
 is injective.
\end{prp}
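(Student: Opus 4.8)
The plan is to identify the stalk homomorphism $\phi_x$ with the restriction of the field homomorphism induced by $\phi$ on the function field $k(X)$, and then to invoke the fact that on an integral scheme every local ring embeds into its function field. First I would record that, by Remark~\ref{rem1}, the morphism $\phi$ is birational; in particular it is dominant, so it sends the generic point $\eta$ of $X$ to $\eta$ and induces a homomorphism of function fields $\phi^{\#}\colon k(X)\to k(X)$. Dominance alone makes $\phi^{\#}$ injective (birationality in fact makes it an isomorphism, but injectivity is all that is needed below).

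Next, fix a closed point $x\in X$; then $\phi(x)$ is again a closed point. Since $X$ is an integral scheme, the canonical localization maps $\mathcal{O}_{X,x}\hookrightarrow\mathcal{O}_{X,\eta}=k(X)$ and $\mathcal{O}_{X,\phi(x)}\hookrightarrow k(X)$ are injective, because each of these local rings is a domain with fraction field $k(X)$. The naturality of the stalk maps of $\phi$ under passage to the generic point yields the commuting relation
$$
\bigl(\mathcal{O}_{X,x}\hookrightarrow k(X)\bigr)\circ\phi_x \;=\; \phi^{\#}\circ\bigl(\mathcal{O}_{X,\phi(x)}\hookrightarrow k(X)\bigr).
$$
The right-hand composite is injective, being an injection followed by the injective map $\phi^{\#}$; hence the left-hand composite is injective, and therefore $\phi_x$ itself must be injective, as claimed.

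I do not anticipate a serious obstacle here: the whole content is the bookkeeping that realizes $\phi_x$ as the restriction of $\phi^{\#}$ to $\mathcal{O}_{X,\phi(x)}$ and verifies the displayed commutativity, which is the standard compatibility of stalk maps with the generic-point localizations. It is worth noting that the hypothesis $\mathrm{codim}_X Y\geq 2$ plays no role in this particular argument; the only input beyond the integrality of $X$ is the birationality of $\phi$ established in Remark~\ref{rem1}.
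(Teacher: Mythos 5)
Your proof is correct, and it takes a genuinely different route from the paper's. The paper argues directly with germs: given a regular function $f$ near $\phi(x)$ whose pullback $f\circ\phi$ vanishes on a neighborhood $B$ of $x$, it uses the isomorphism $\phi_1\colon U \to V$ from Remark \ref{rem1} to conclude that $f$ vanishes on $\phi(B\cap U)$, a dense open subset of the domain of $f$ (density coming from the irreducibility of $X$), whence $f=0$ because $X$ is reduced. You instead embed both stalks into the function field via the generic-point localizations and realize $\phi_x$ as the restriction of $\phi^{\#}\colon k(X)\to k(X)$; injectivity is then immediate, since any homomorphism of fields is injective (dominance, which follows from Remark \ref{rem1}, is what guarantees $\phi(\eta)=\eta$ so that $\phi^{\#}$ is defined between the function fields). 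Both arguments rest on the same two inputs --- integrality of $X$ and dominance of $\phi$ --- but yours trades the paper's topological density argument for the algebraic triviality about field homomorphisms together with the standard compatibility of stalk maps with generization; this is arguably cleaner, and it makes structurally transparent your (accurate) closing observation that $\mathrm{codim}_X Y \geq 2$ is never used, since the paper's own proof likewise only needs $Y$ to be a proper closed subset. One minor remark: your step that $\phi(x)$ is again a closed point is true (finite-type morphisms over the algebraically closed field $k$ preserve closed points) but superfluous, as your argument works verbatim for an arbitrary point $x$ of $X$.
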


\begin{proof}
We follow earlier notational conventions. Given an open neighborhood $U$ of $x$ and an element
$f\, \in \, \Gamma\, (U, \, \mathcal{O}_X \,)$, the germ of the function $f$ will be denoted by
$[(\, U,\, f\,)] \, \in\, \mathcal{O}_{X,\, x}$.

Let $A$ be an open subset of $X$ containing $\phi\,(x)$, and let $f$ be a regular function on $A$ such that 
the germ $[(\, \phi^{-1}\,(A),\, f \circ \phi\,)] \, \in\, \mathcal{O}_{X,\, x}$ vanishes, meaning there is 
an open neighborhood $B \, \subset \, \phi^{-1}\,(A)$ of $x$ such that $f \, \circ\, \phi \, =\,0$ on $B$. In 
particular, $f \, \circ\, \phi \, =\,0$ on $B\, \cap \, U$. Since $\phi_1\,:=\,
\phi\big\vert_U \, :\, U \, \longrightarrow \, V$ is an isomorphism, it follows that
$f\,=\,0$ on $\phi(B \, \cap \,U)$. The variety $X$ being irreducible, $\phi(B \, \cap \,U)$ 
is a dense open subset of $A$. Thus $f\,=\,0$ on $A$, proving the proposition.
\end{proof}

\subsection*{Homomorphisms of coherent sheaves}

A coherent sheaf $\mathcal{F}$ on $X$ is said to be torsion-free (respectively, reflexive) if the natural map 
$$\mathcal{F}\ \longrightarrow \ \mathcal{F}^{\vee \vee}$$ is injective (respectively, bijective). Here 
$\mathcal{V}^{\vee}$ denotes the $\mathcal{O}_X$--dual of $\mathcal{V}$. The following proposition is standard; see \cite[Proposition 1.4.1]{Bruns-Herzog}. It will be useful in the proof of the main result.

\begin{prp}\label{isomorphism_of_vector_bundles_lemma}
Let $X$ be a normal algebraic variety over $k$, and let $\alpha\,:\, \mathcal{F} \,\longrightarrow
\,\mathcal{G}$ be a homomorphism of coherent sheaves on $X$ whose restriction over $X \setminus Y$
is injective, where $Y\, \subset\, X$ is some closed algebraic subset of codimension at least $2$.
Assume that $\mathcal{G}$ is torsion-free. Then the following two statements hold:
\begin{enumerate}
\item The homomorphism $\alpha$ is injective if $\mathcal{F}$ is torsion-free.

\item If $\mathcal{F}$ is reflexive and the restriction of $\alpha$ to $X\setminus Y$
$$
\alpha_1\ :=\ \alpha\big\vert_{X\setminus Y}\ :\ \mathcal{F}\big\vert_{X\setminus Y}
\ \longrightarrow\ \mathcal{G}\big\vert_{X\setminus Y}
$$
is an isomorphism, then $\alpha$ is an isomorphism.
\end{enumerate}
\end{prp}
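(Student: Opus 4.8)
The plan is to handle the two parts separately, in each case controlling a kernel or cokernel sheaf by combining the torsion-free/reflexive hypotheses with the behaviour of the sheaves over the dense open set $X\setminus Y$.

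For part (1), I would examine the kernel $\mathcal{K}:=\ker\alpha$, a coherent subsheaf of $\mathcal{F}$. Because $\alpha$ is injective over $X\setminus Y$, the restriction $\mathcal{K}\big\vert_{X\setminus Y}$ vanishes, so $\mathcal{K}$ is supported on the proper closed subset $Y$. On the other hand, as a subsheaf of the torsion-free sheaf $\mathcal{F}$ the sheaf $\mathcal{K}$ is itself torsion-free, and a torsion-free coherent sheaf on the integral scheme $X$ injects into the skyscraper/constant sheaf attached to its stalk at the generic point $\eta$. Since $\eta\notin Y$ we get $\mathcal{K}_\eta=0$, whence $\mathcal{K}=0$ and $\alpha$ is injective. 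I note that this argument uses only that $Y$ is proper, not the codimension bound.

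For part (2), part (1) (together with the fact that reflexive sheaves are torsion-free) already gives injectivity of $\alpha$, so only surjectivity remains. Writing $j\,:\,X\setminus Y\,\hookrightarrow\,X$ for the open immersion, the strategy is to compare $\alpha$ with the canonical maps into the pushforwards of the restrictions. Since $\mathcal{G}$ is torsion-free, the natural morphism $\beta\,:\,\mathcal{G}\,\longrightarrow\,j_*(\mathcal{G}\big\vert_{X\setminus Y})$ is injective. The crucial input is that, because $\mathcal{F}$ is reflexive on the normal variety $X$ and $Y$ has codimension at least $2$, the canonical morphism $\mu\,:\,\mathcal{F}\,\longrightarrow\,j_*(\mathcal{F}\big\vert_{X\setminus Y})$ is an \emph{isomorphism} (the Hartogs-type extension property of reflexive sheaves, equivalently the characterization reflexive $=$ torsion-free $+$ $S_2$). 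By naturality of the adjunction unit $\mathrm{id}\to j_*j^*$ one has $\beta\circ\alpha = j_*(\alpha_1)\circ\mu$, and since $\alpha_1$ is an isomorphism the map $j_*(\alpha_1)$ is an isomorphism as well; hence $\beta\circ\alpha$ is an isomorphism. A one-line diagram chase then concludes: $\beta\circ\alpha$ is an isomorphism and $\beta$ is a monomorphism, so $\beta$ is also an epimorphism, therefore an isomorphism, and consequently $\alpha=\beta^{-1}\circ(\beta\circ\alpha)$ is an isomorphism.

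The step that requires the most care — and the place where both normality of $X$ and the codimension $\geq 2$ hypothesis are genuinely used — is the assertion that $\mu\,:\,\mathcal{F}\,\xrightarrow{\ \sim\ }\,j_*(\mathcal{F}\big\vert_{X\setminus Y})$ for a reflexive $\mathcal{F}$. I would either invoke this as the standard extension property of reflexive sheaves over a normal variety or reduce it to the local statement about reflexive modules and depth in \cite[Proposition 1.4.1]{Bruns-Herzog}; everything else in the proof is formal.
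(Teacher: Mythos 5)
Your proposal is correct and follows essentially the same route the paper indicates: the paper treats part (1) as the standard torsion-freeness fact (citing \cite[Proposition 1.4.1]{Bruns-Herzog}) and deduces part (2) from the normality of reflexive sheaves, i.e.\ the isomorphism $\mathcal{F}\xrightarrow{\ \sim\ } j_*\bigl(\mathcal{F}\big\vert_{X\setminus Y}\bigr)$ of \cite[Proposition 1.6]{stable_reflexive_sheaves}, which is precisely your map $\mu$. Your write-up merely makes the paper's sketch explicit --- the kernel-supported-on-$Y$ argument for (1), including the correct observation that the codimension bound is not needed there, and the naturality square with the diagram chase for (2) are all sound.
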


For any torsion-free coherent sheaf $\mathcal{E}$ on $X$ and any
Zariski open subset $U\, \subset\, X$, such that the codimension of its complement
$X\setminus U \, \subset\, X$ is at least two, then there is a natural inclusion map
\begin{equation}\label{e1}
\Phi_{\mathcal{E}, U}\ :\ \mathcal{E}\ \hookrightarrow\ \iota_*\iota^* \mathcal{E},
\end{equation}
where $\iota \,:\, U\, \hookrightarrow\, X$ is the inclusion map. A torsion-free coherent sheaf $E$ on $X$
is called \textit{normal} if the map $\Phi_{\mathcal{E}, U}$ in \eqref{e1} is an isomorphism for every
such $U$. Proposition \ref{isomorphism_of_vector_bundles_lemma}(2) can be deduced using the
fact that a reflexive sheaf is normal (cf. \cite[Proposition 1.6]{stable_reflexive_sheaves}).

\section{Endomorphisms of surfaces and non-singular varieties}\label{section first two case}

In this section, we prove the first two cases of \Cref{main_thm_codim_case}. 
An alternative proof of the first case of \Cref{main_thm_codim_case} is discussed in \Cref{non-singular case remark}.

\begin{thm}\label{th1}
Let $X, \, Y$ and $\phi$ be as in \Cref{conv1}, with $\mathrm{codim}_XY\, \geq \, 2$. Assume that at least one of the 
following two statements holds:
\begin{enumerate}[label=(\roman*)]
\item $X$ is non-singular.

\item $X$ is an algebraic surface.
\end{enumerate}
Then $\phi$ is an automorphism.
\end{thm}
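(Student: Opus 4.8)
The plan is to show in each case that $\phi$ is quasi-finite, and then invoke \Cref{quasi-finite to automorphism}. Throughout I keep the notation of \Cref{conv1} and use the two facts already recorded: $\phi_1 = \phi\big\vert_U \colon U \to V$ is an isomorphism (\Cref{rem1}), and $\phi^{-1}(V) = U$ (\Cref{preimage_preserving_lemma}). Taking complements in the latter gives the identity $\phi^{-1}(Z) = X \setminus \phi^{-1}(V) = X \setminus U = Y$, which is the key relation driving both arguments.

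For the surface case (ii) this relation already suffices. Since $\dim X = 2$ and $\mathrm{codim}_X Y \geq 2$, the closed set $Y$ has dimension at most $0$, so it is a finite set of points. Every point of $X$ lies in $V$ or in $Z$: the fibre of $\phi$ over a point of $V$ is a single point because $\phi^{-1}(V) = U$ and $\phi_1$ is injective, while the fibre over a point of $Z$ is contained in $\phi^{-1}(Z) = Y$ and is therefore finite. Hence all fibres of $\phi$ are finite, so $\phi$ is quasi-finite. (Equivalently, $\phi$ contracts no curve: a contracted curve would lie either in $U$, where $\phi$ is injective, or in the finite set $\phi^{-1}(Z) = Y$.)

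The non-singular case (i) cannot be treated this way, since for $\dim X$ large the set $\phi^{-1}(Z) = Y$ may be positive-dimensional and finiteness of the fibres is no longer automatic. Here I would argue with K\"ahler differentials. Because $X$ is non-singular, $\Omega_X$ is locally free of rank $\dim X$, and so is the pullback $\phi^*\Omega_X$. Consider the natural cotangent homomorphism $d\phi \colon \phi^*\Omega_X \to \Omega_X$. Over $U$ the map $\phi_1$ is an isomorphism onto the open subset $V$, whence $d\phi\big\vert_U$ is an isomorphism. Now $\phi^*\Omega_X$ is a vector bundle, hence reflexive, and $\Omega_X$ is a vector bundle, hence torsion-free; applying \Cref{isomorphism_of_vector_bundles_lemma}(2) to $\alpha = d\phi$, with the codimension-$\geq 2$ set $Y$, I conclude that $d\phi$ is an isomorphism on all of $X$. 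Consequently its cokernel, the sheaf of relative K\"ahler differentials of the morphism $\phi$, vanishes, so $\phi$ is unramified; an unramified morphism of finite type has finite fibres, hence is quasi-finite, and \Cref{quasi-finite to automorphism} again finishes the proof.

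The main obstacle, and the only place where the codimension hypothesis genuinely enters in case (i), is the extension of the isomorphism $d\phi$ from $U$ across the codimension-$2$ set $Y$; this is exactly what reflexivity of $\phi^*\Omega_X$ buys through \Cref{isomorphism_of_vector_bundles_lemma}(2), after which unramifiedness and quasi-finiteness are formal. The surface case, by contrast, is elementary once \Cref{preimage_preserving_lemma} is available, and requires neither the differential computation nor \Cref{equal_dim_lemma}.
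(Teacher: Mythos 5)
Your proposal is correct and follows essentially the same route as the paper: in the surface case both arguments reduce to observing that every fibre meets $U$ in at most one point and otherwise lies in the finite set $Y$, and in the non-singular case both apply \Cref{isomorphism_of_vector_bundles_lemma} to the cotangent map $\alpha = d\phi \colon \phi^*\Omega_X \to \Omega_X$ (locally free, hence reflexive and torsion-free) to conclude $\Omega_{X/X}=0$, so $\phi$ is unramified, quasi-finite, and then an automorphism by \Cref{quasi-finite to automorphism}. The only cosmetic difference is that your surface argument invokes \Cref{preimage_preserving_lemma} to get $\phi^{-1}(Z)=Y$, whereas the paper notes directly that finiteness of $Y$ plus injectivity on $U$ bounds every fibre.
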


\begin{proof}
We continue with the notation of Convention \ref{conv1}.

Assume that $X$ is non-singular. Consider the following exact sequence of K\"{a}hler differentials 
\begin{equation}\label{e3}
\phi^* (\, \Omega_{X}\,)\ \overset{\alpha}{\longrightarrow}\ \Omega_{X}\ \longrightarrow\ \Omega_{X/X}\ \longrightarrow\ 0.
\end{equation}
Note that both $\Omega_{X}$ and $\phi^* (\, \Omega_{X}\,)$ are locally free sheaves. By the hypothesis, the 
restriction of $\alpha$ to $U$ is an isomorphism. So $\alpha$ is an isomorphism which follows from 
\Cref{isomorphism_of_vector_bundles_lemma}. The cokernel $\Omega_{X/X}$ vanishes globally; hence $\phi$ is 
unramified. Consequently, $\phi$ is quasi-finite, and now it is an automorphism
by \Cref{quasi-finite to automorphism}.

Next, assume that $X$ is an algebraic surface. Then we have $\dim Y\, =\, 0$ if $Y$ is nonempty. The restriction 
$$\phi_1\, :\, X \setminus Y \, \longrightarrow X$$ is an open immersion (see Remark
\ref{rem1}). Since $Y$ is a finite set, the map $\phi$ is quasi-finite.
It is an automorphism by \Cref{quasi-finite to automorphism}.
\end{proof}

\section{Submersive morphisms}\label{section four}

We will first prove \Cref{isolated singularity theorem} and then \Cref{Automorphism of non-singular locus theorem}.

\begin{thm}\label{isolated sing section case}
Let $X, \, Y$ and $\phi$ be as in \Cref{conv1}, with $\mathrm{codim}_XY\, \geq \, 2$, such
that $X$ has only finitely many singular points. Denoting the non-singular locus of $X$ by $W$, assume that
the restriction of $\phi$ to $W$ induces an automorphism of $W$. Then $\phi$ is an automorphism.
\end{thm}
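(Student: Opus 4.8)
The plan is to reduce everything to showing that $\phi$ is quasi-finite and then invoke \Cref{quasi-finite to automorphism}, which upgrades quasi-finiteness to the desired automorphism conclusion; recall that the hypotheses of \Cref{conv1} already force $\phi$ to be birational by \Cref{rem1}, so the only missing ingredient is a control on the dimensions of the fibers of $\phi$.

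First I would record the two structural facts supplied by the hypotheses. Since $X$ has only finitely many singular points, the singular locus $S$ is a finite set and $W = X \setminus S$; in particular every subset of $S$ is finite. The assumption that $\phi$ restricts to an automorphism of $W$ provides both the containment $\phi(W) \subseteq W$ and the bijectivity of $\phi\vert_W : W \to W$, so that for each closed point $y \in W$ the fiber $(\phi\vert_W)^{-1}(y)$ consists of a single point. Next I would bound the fiber of $\phi$ over an arbitrary closed point $y \in X$ by splitting along the decomposition $X = W \sqcup S$. The piece $\phi^{-1}(y) \cap S$ is contained in the finite set $S$, hence finite for every $y$. For the piece lying in $W$, the containment $\phi(W) \subseteq W$ forces $\phi^{-1}(S) \subseteq S$, because any $w \in W$ has $\phi(w) \in W$ and therefore $\phi(w) \notin S$; consequently $\phi^{-1}(y) \cap W = \varnothing$ when $y \in S$, while $\phi^{-1}(y) \cap W$ is the single-point fiber of $\phi\vert_W$ when $y \in W$. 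In either case $\phi^{-1}(y) \cap W$ has at most one element, so $\phi^{-1}(y) = \bigl(\phi^{-1}(y) \cap W\bigr) \cup \bigl(\phi^{-1}(y) \cap S\bigr)$ is finite. Thus $\phi$ is quasi-finite, and \Cref{quasi-finite to automorphism} concludes the argument.

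Because the reasoning only manipulates set-theoretic fibers, I do not expect a serious obstacle; the single point that demands care is the interaction of the two hypotheses over the singular points, namely deducing $\phi^{-1}(S) \subseteq S$ from $\phi(W) \subseteq W$ so that no positive-dimensional fiber can be concealed above a singular point. The finiteness of $S$ is exactly what prevents the singular part $\phi^{-1}(y) \cap S$ from contributing a positive-dimensional component, and this is where the isolated-singularity hypothesis is genuinely used. The codimension-two hypothesis on $Y$ plays no direct role here beyond placing us in the birational setting of \Cref{rem1}, which requires only that $Y$ be a proper closed subset.
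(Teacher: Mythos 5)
Your proposal is correct and follows essentially the same route as the paper: the paper's proof likewise observes that $\phi\big\vert_W$ is quasi-finite by hypothesis, that the finiteness of $X \setminus W = S$ makes $\phi$ itself quasi-finite, and then invokes \Cref{quasi-finite to automorphism}. Your more detailed fiber decomposition along $X = W \sqcup S$, including the observation $\phi^{-1}(S) \subseteq S$, is just an expanded version of the paper's two-line argument.
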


\begin{proof}
By the hypothesis, the restriction of $\phi$ to $W$ is quasi-finite. As $X \setminus W$ is finite, it follows 
that $\phi$ is quasi-finite. Now the result follows from \Cref{quasi-finite to automorphism}.
\end{proof}

Next, we prove \Cref{Automorphism of non-singular locus theorem} case by case.

\begin{thm}\label{automorphism restriction result}
Let $X, \, Y$ and $\phi$ be as in \Cref{conv1}, with $\mathrm{codim}_XY\, \geq \, c$. Denote the non-singular
locus of $X$ by $W$. Assume that at least one of the following statements holds:
\begin{enumerate}
\item $c \ = \ 2$, and $\phi$ is surjective.
\item $c \ = \ 2$, and $X$ is a $\mathbb{Q}$-factorial variety.
\item $c \ = \ 2$, and $X$ is locally a complete intersection.
\item $c \ = \ 3$.
\end{enumerate}
Then $\phi\ (W) \ \subseteq \ W$, and the restriction $$\phi\big\vert_{W}\ : \ W \ 
\longrightarrow \ W$$ is an automorphism of $W$.
\end{thm}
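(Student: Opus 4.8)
The plan is to reduce everything to the single assertion $\phi(W)\subseteq W$. Granting this, the restriction $\phi|_W\colon W\to W$ is an endomorphism of the \emph{smooth} variety $W$ whose restriction to $W\setminus Y=W\cap U$ is injective (it is a restriction of the isomorphism $\phi_1$ of \Cref{rem1}), and $Y\cap W$ has codimension $\geq c\geq 2$ in $W$; hence the non-singular case of \Cref{th1} applies to $\phi|_W$ and yields that it is an automorphism of $W$. So I would devote the whole proof to establishing $\phi(W)\subseteq W$, using throughout the structural facts attached to \Cref{conv1}: that $\phi_1\colon U\to V$ is an isomorphism, that $\phi^{-1}(V)=U$ by \Cref{preimage_preserving_lemma} (so $\phi^{-1}(Z)=Y$ and $\phi(Y)\subseteq Z$), that $\dim Y=\dim Z$ by \Cref{equal_dim_lemma}, and that $\operatorname{codim}_X S\geq 2$ by normality.

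Next I would reformulate the goal. Since $\phi_1$ is an isomorphism it carries the smooth locus $U\cap W$ of $U$ onto the smooth locus $V\cap W$ of $V$, so every smooth point of $U$ already maps into $W$; consequently $\phi(W)\subseteq W$ is equivalent to $\phi^{-1}(S)\subseteq S$, i.e.\ to the claim that no smooth point maps to a singular point. Two observations cut this down further. First, on the open set $W_0:=W\cap\phi^{-1}(W)$ both $\Omega_X$ and $\phi^*\Omega_X$ are locally free of rank $n=\dim X$, and the differential $\alpha$ of \eqref{e3} is an isomorphism over $W_0\cap U$; thus $\det\alpha$ is a nonzero section of a line bundle on $W_0$ whose vanishing locus lies in $Y\cap W_0$, of codimension $\geq 2$, and so is empty by Krull's principal ideal theorem. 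Hence $\phi$ is étale on $W_0$. Second, at a smooth point $x$ where $\phi$ is quasi-finite, the injective local homomorphism $\mathcal O_{X,\phi(x)}\hookrightarrow\mathcal O_{X,x}$ of \Cref{injectivity of stalk maps} exhibits $\mathcal O_{X,x}$, by Zariski's Main Theorem, as a localization of a finite birational extension of the normal domain $\mathcal O_{X,\phi(x)}$; normality forces this extension to be trivial, whence $\mathcal O_{X,\phi(x)}=\mathcal O_{X,x}$ is regular and $\phi(x)\in W$. Combining the two, a smooth point can map to a singular point only if it lies on a positive-dimensional fibre of $\phi$. The entire problem therefore reduces to: \emph{no positive-dimensional fibre of $\phi$ meets $W$}; equivalently, the locus $E\subseteq Y$ where $\phi$ is not quasi-finite is disjoint from $W$.

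It is here that the four hypotheses enter, each excluding a smooth point on a contracted fibre by a different mechanism. For $c=3$ (case (iv)) I would run a dimension count: $E$ and its image lie in subvarieties of codimension $\geq 3$, and the fibre-dimension theorem together with \Cref{equal_dim_lemma} constrains $\dim\phi(E)$ enough to be incompatible with a smooth point on such a fibre. When $X$ is $\mathbb Q$-factorial (case (ii)) I would realise the offending locus as the support of an effective divisor, using that the relevant degeneracy data of $\phi$ is $\mathbb Q$-Cartier, and contradict its containment in the codimension-$\geq 2$ set $Y$. When $X$ is locally a complete intersection (case (iii)) I would feed the conormal presentation of $\Omega_X$ into a depth/homological argument, in the spirit of \Cref{isomorphism_of_vector_bundles_lemma}, to show that the cokernel of $\alpha$ cannot be supported at a smooth point. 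Finally, when $\phi$ is surjective (case (i)), surjectivity upgrades $\phi(Y)\subseteq Z$ to $\phi(Y)=Z$, and with $\dim Y=\dim Z$ this equidimensional surjection has generically finite fibres, which I would use to rule out positive-dimensional fibres through $W$.

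The main obstacle is exactly this last step: proving that no smooth point lies over a singular point. The difficulty is structural rather than computational, since the reflexive-sheaf techniques of \Cref{isomorphism_of_vector_bundles_lemma} are insensitive to modifications in codimension $\geq 2$ and so cannot by themselves detect whether $\phi$ contracts a fibre through a smooth point; genuinely pointwise, fibre-dimensional information is required. I expect the $\mathbb Q$-factorial and locally-complete-intersection cases to be the most delicate, because there one must convert a singularity hypothesis on the \emph{target} points $\phi(x)$ into control of the local homomorphism $\mathcal O_{X,\phi(x)}\hookrightarrow\mathcal O_{X,x}$, whereas the cases $c=3$ and $\phi$ surjective should yield to dimension-theoretic arguments built on \Cref{equal_dim_lemma} and \Cref{injectivity of stalk maps}.
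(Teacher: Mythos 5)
Your reduction framework is sound as far as it goes: if $\phi(W)\subseteq W$ is granted, the conclusion follows (the paper concludes the same way, via Ax's theorem), and your two preliminary observations are correct --- $\phi$ is \'etale on $W\cap\phi^{-1}(W)$ by the determinant argument, and at any point where $\phi$ is quasi-finite the local homomorphism $\mathcal{O}_{X,\phi(x)}\hookrightarrow\mathcal{O}_{X,x}$ is forced by Zariski's Main Theorem and normality to be an isomorphism. So you have correctly isolated the crux: no positive-dimensional fibre of $\phi$ may pass through a smooth point. But this crux is precisely what your proposal does not prove, and the mechanisms you suggest for it cannot be made to work in at least two of the four cases. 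In case (i), generic finiteness of the equidimensional surjection $\phi|_Y\colon Y\to Z$ constrains only the \emph{generic} fibre: nothing in dimension theory prevents $\phi|_Y$ from contracting a curve through a point of $W\cap Y$ to a point of $S\cap Z$ while being finite over a dense open subset of $Z$. The identical objection defeats the ``dimension count'' proposed for case (iv): a contracted curve inside a codimension-$3$ set $Y$ mapping into a codimension-$3$ set $Z$ with $\dim Y=\dim Z$ is numerically consistent. In cases (ii) and (iii) you invoke exactly the theorem that needs proving; moreover, the standard MMP-style fact you are implicitly appealing to (exceptional loci of birational morphisms to $\mathbb{Q}$-factorial varieties are divisorial) is proved using properness --- projection formula against complete curves in fibres --- and $\phi|_W\colon W\to X$ is \emph{not} proper, so its fibres may be affine curves and that argument breaks down.

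What fills the gap in the paper is a pair of purity theorems, neither of which appears in your outline. For case (i), the paper applies classical Zariski--Van der Waerden purity not to $\phi|_W$ but to $\phi|_{\phi^{-1}(W)}\colon \phi^{-1}(W)\to W$, whose \emph{target} is smooth (\Cref{open immersion result}); purity forces the exceptional locus, which sits inside $Y$ and so has codimension $\geq 2$, to be empty, making this map an open immersion, and surjectivity plus Ax then finishes. For cases (ii)--(iv), where the target $X$ of $\phi|_W$ is singular and classical purity fails (think of small contractions), the paper uses K\"allstr\"om's purity of the critical and branch loci \cite[Corollary 4.9, Theorem 4.6]{Kallstrom}. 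The input to those results is the submersiveness of $\phi$ (\Cref{submersive endomorphism lemma}), which is established by the reflexive-sheaf technique of \Cref{isomorphism_of_vector_bundles_lemma} applied to the \emph{dual} map $d\phi\colon\Omega_X^{\vee}\to(\phi^*\Omega_X)^{\vee}$ --- precisely the codimension-$2$-insensitive tool you set aside as inadequate. The point you missed is that such tools suffice once they are fed into a singular-variety purity theorem that converts ``submersive'' into ``\'etale'' under hypotheses (ii)--(iv); without these purity results (or a substitute for them), your plan cannot be completed.
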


The following proposition will be used in the proof of Theorem \ref{automorphism restriction result}.

\begin{prp}\label{open immersion result}
Use Convention \ref{conv1}, and assume that $\mathrm{codim}_XY\, \geq \, 2$. The
restriction 
\begin{align}\label{eqn_open immersion}
\phi_2\ := \
\phi\big\vert_{\phi^{-1}(W)}\ : \ \phi^{-1}(W)\ \longrightarrow \ W
\end{align}
 is an open immersion. In
particular, $\phi^{-1}(W) \cap S \, =\, \emptyset$.
\end{prp}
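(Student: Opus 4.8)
The plan is to reduce the statement to a quasi-finiteness claim and then invoke Zariski's Main Theorem, exactly as in \Cref{quasi-finite to automorphism}. First I would record the basic geometry. By \Cref{rem1} the map $\phi$ is birational and restricts to an isomorphism $\phi_1\colon U\to V$, so the \emph{exceptional locus} $E$ of $\phi$ (the set of points at which $\phi$ is not a local isomorphism) is contained in $X\setminus U=Y$; in particular $\mathrm{codim}_X E\ge 2$. Since $X$ is normal, the singular locus $S$ also has codimension at least two, so $W$ is a dense regular open subset and $\phi^{-1}(W)$ is a dense open subset of $X$; thus the restriction $\phi_2\colon\phi^{-1}(W)\to W$ is birational with regular, hence normal, target. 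Consequently, once $\phi_2$ is shown to be quasi-finite, Zariski's Main Theorem (as used in \Cref{quasi-finite to automorphism}) gives that $\phi_2$ is an open immersion, and the final assertion follows: an open immersion into the smooth variety $W$ has regular source, so every point of $\phi^{-1}(W)$ is a smooth point of $X$, that is, $\phi^{-1}(W)\cap S=\emptyset$.

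The heart of the matter is to prove that $\phi_2$ has no positive-dimensional fibre, equivalently that $E\cap\phi^{-1}(W)=\emptyset$. I would argue by contradiction. Using $\phi^{-1}(V)=U$ from \Cref{preimage_preserving_lemma} (so that a point $w\in W$ with an infinite fibre must lie in $Z$ and then $\phi^{-1}(w)\subseteq\phi^{-1}(Z)=Y$), a fibre $\phi^{-1}(w)$ of dimension $\ge 1$ would mean that $\phi$ contracts a positive-dimensional subvariety $C\subseteq Y$ to the regular point $w$. The contradiction should come from the fact that a birational morphism cannot contract anything to a regular point of its target without creating an exceptional divisor, a purity statement for the exceptional locus over the regular locus of the target. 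Granting this, $E\cap\phi^{-1}(W)$ would be non-empty of pure codimension one in $\phi^{-1}(W)$, contradicting $E\subseteq Y$ with $\mathrm{codim}_XY\ge 2$. Hence $E\cap\phi^{-1}(W)=\emptyset$, so $\phi_2$ is a local isomorphism at every point, in particular étale and quasi-finite, completing the reduction above. For surfaces this step is automatic, since then $Y$ is finite and every fibre contained in $Y$ is finite.

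The main obstacle is precisely this purity input, because $\phi$ is not proper, whereas the classical assertion ``the exceptional locus of a birational morphism to a regular variety is pure of codimension one'' is usually established under properness. To make it rigorous I would work locally on the target: replace a neighbourhood of $w$ in $W$ by the regular local scheme $\operatorname{Spec}\mathcal{O}_{X,w}$, which is a UFD, and pass to a proper birational model $\pi\colon\widetilde{X}\to\operatorname{Spec}\mathcal{O}_{X,w}$ dominating $\phi$ over this neighbourhood (for instance the normalization of the closure of the graph of $\phi$ restricted there). On such a model the exceptional locus is a divisor by the classical purity theorem; one then checks, using that $\phi$ is a genuine morphism and an isomorphism over $U$, that this divisor maps into $Y$, producing the desired codimension-one subset of $\phi^{-1}(W)$. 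The role of \Cref{injectivity of stalk maps} here is to guarantee that $\phi$ remains dominant after these localizations, so that ``birational'' and ``same function field'' are preserved throughout. The delicate bookkeeping lies in transferring the divisor found on the proper model back to $\phi^{-1}(W)$ while keeping it inside $Y$, and this is the step I expect to require the most care.
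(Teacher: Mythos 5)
Your overall skeleton is exactly the paper's argument: use \Cref{preimage_preserving_lemma} to confine the exceptional locus of $\phi_2$ inside $Y$ (so it has codimension at least $2$), and contradict purity of the exceptional locus of a birational morphism onto the non-singular target $W$. The gap is in how you handle the purity input. You assert that the classical purity statement is ``usually established under properness'' and therefore construct a detour through a proper birational model over $\operatorname{Spec}\mathcal{O}_{X,w}$. But the Zariski--Van der Waerden purity theorem, in the form the paper cites (\cite[Proposition 1, Chapter III.9]{Red_book}), requires no properness at all: its proof is local at a point $x$ of the \emph{source}. If $\phi_2$ is not a local isomorphism at $x$, with $w=\phi_2(x)$, one picks $t\in\mathcal{O}_{\phi^{-1}(W),\,x}\setminus\mathcal{O}_{W,w}$ (the inclusion of local rings inside the common function field is strict), writes $t=a/b$ with $a,b$ coprime in the UFD $\mathcal{O}_{W,w}$ --- this is where regularity of $W$ enters --- and observes that $V(\phi_2^{\#}b)$ is a hypersurface through $x$ on which $\phi_2^{\#}a$ also vanishes, hence a codimension-one subvariety through $x$ mapping into $V(a,b)$, which has codimension at least $2$ in $W$. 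Such a hypersurface is exceptional, contradicting $E\subseteq Y$ with $\mathrm{codim}_X Y\geq 2$. The paper applies this directly to $\phi_2$ and is done in a few lines.

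Moreover, your workaround is not merely unnecessary; as sketched it fails at exactly the point you flag. To obtain a model proper over $\operatorname{Spec}\mathcal{O}_{X,w}$ you must compactify the source (the graph closure in $X'\times\operatorname{Spec}\mathcal{O}_{X,w}$ is not proper over the base unless $X'$ already is), and the exceptional divisor that purity produces on $\widetilde{X}$ may then lie entirely in the boundary $\widetilde{X}\setminus X'$ created by the compactification. In that case nothing descends: you get no codimension-one subset of $\phi^{-1}(W)$ contained in $Y$, and the contradiction evaporates. Purity on the proper model only controls the full fibre over $w$, boundary components included, so it says nothing about the codimension of the exceptional locus inside $X$ near your contracted curve $C$. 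The repair is simply to use the properness-free, source-local version of purity, as the paper does. Your remaining reductions are all correct and agree with the paper: $\phi^{-1}(Z)=Y$ from \Cref{preimage_preserving_lemma}, quasi-finite birational plus Zariski's Main Theorem giving an open immersion onto the normal (indeed smooth) $W$, and regularity of the source of an open immersion into $W$ giving $\phi^{-1}(W)\cap S=\emptyset$.
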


\begin{proof}
We continue with the notation of Convention \ref{conv1}. The last statement is immediate from
the first one because $S$ (respectively, $W$) is the singular (respectively, non-singular)
locus of $X$. So it suffices to prove the first statement.

In Remark \ref{rem1}, it was already observed that $\phi$ is birational. It follows that $\phi_2$ is also 
birational. The definition of the (minimal) exceptional locus of $\phi_2$ is recalled: A closed subset $E\, 
\subset\, \phi^{-1}(W)$ is called exceptional for $\phi_2$ if it is of the form $\phi_2^{-1}(W \setminus A)$, 
where $A \, \subset\, W$ is some open subset such that the restriction $$\phi_2\big\vert_{\phi_2^{-1}(A)}\ : 
\ \phi_2^{-1}(A)\ \longrightarrow \ A$$ is an isomorphism. Furthermore, $E$ is called minimal exceptional for 
$\phi_2$ if the above $A$ is the unique maximal open subset of $W$ such that the restriction 
$\phi_2\big\vert_{\phi_2^{-1}(A)}$ is an isomorphism. It follows from \Cref{preimage_preserving_lemma} that 
$Y$ is exceptional locus for $\phi$. Denoting $E$ to be the minimal exceptional locus of $\phi_2$, we have
$$
E\ \subset\ \phi^{-1}(W)\cap Y \ \subset\ Y.
$$
This implies that
\begin{equation}\label{e2}
\mathrm{codim}_{\phi^{-1} (W)} 
E\ = \ \mathrm{codim }_X E\ \geq \ \mathrm{codim }_X Y\ \geq\ 2.
\end{equation}
On the other hand, applying Zariski--Van der Waerden purity theorem (cf.
\cite[Proposition 1, Chapter III.9]{Red_book}) to $\phi_2$,
it follows that $$\mathrm{codim}_{\phi^{-1} (W)} E\, = \,1$$
if $E $ is non-empty. But this contradicts \eqref{e2}.

Therefore, we conclude that $E\ = \ \emptyset$, which is the same as the statement that $\phi_2$ is an open immersion.
\end{proof}

\begin{rem}\label{non-singular case remark}
\Cref{open immersion result} gives an alternative proof of \Cref{main_thm_codim_case}(i). In this
case, $W$ is the same as $X$; therefore, we have $\phi^{-1}(W) \ = \ X$. Consequently, $\phi$ is injective, and
hence it is an automorphism, by the theorem of Ax.
\end{rem}

\begin{proof}[{Proof of the first case of \Cref{automorphism restriction result}}]
According to the hypothesis, $\phi$ is surjective. From Proposition \ref{open immersion result}
we know that the restriction $\phi_2$ in \eqref{eqn_open immersion} is an isomorphism. Therefore,
$$\phi_2^{-1}\ : \ W \ \longrightarrow \ W$$ is an injective endomorphism; so $\phi_2^{-1}$ is an automorphism (theorem
of Ax). This yields that $\phi^{-1}(W) \ = \ W$. Thus $\phi_2$ in \eqref{eqn_open immersion} is an automorphism of $W$.
\end{proof}

\subsection{Submersive and differentially complete intersection morphisms}

The rest of the article heavily relies on the terminology and content of \cite{Kallstrom}. Some 
definitions and results from \cite{Kallstrom} are recalled.

\begin{defn}
Take two varieties $X,\, Y$ defined over $k$, and let $f\,:\, X \, \longrightarrow \, Y$ be a regular morphism. The
map $f$ is said to be \textit{differentially a complete intersection} (``d.c.i.'' for short) at a point $x$ of
$X$ if $$\mathrm{pd}_{\mathcal{O}_{X,\, x}} \left( \, \Omega_{X/Y}\, \right)_x\ \leq \ 1.$$ Here
$\mathrm{pd}_R\,M$ denotes the projective dimension of the $R$--module $M$ (same as the minimal length of
free resolutions of $M$). Also, $f$ is said to be d.c.i. if it is so at each point of
$X$ (see \cite[p.~161]{Kallstrom}). A variety $X$ is said to be d.c.i. if the morphism $X\,
\longrightarrow \,\mathrm{spec}\, k$ is so.
\end{defn}

Note that a non-singular variety is differentially a complete intersection.

\begin{defn}\label{ramification locus}
The \textit{ramification locus} of a regular morphism $f\ :\ X \ \longrightarrow \ Y$, denoted
by $B_f$, is defined to be the set of points $x$ of $X$ such that $\left(\Omega_{X/Y}\right)_x$ is not free.
\end{defn}

Given $f\,:\, X \, \longrightarrow \, Y$, we have the following natural sequence of K\"{a}hler differentials
\begin{equation*}\label{eq_temp7}
f^*  (\, \Omega_{Y}\,)\, \longrightarrow \, \Omega_X \, \longrightarrow \, \Omega_{X/Y}\, \longrightarrow 0.
\end{equation*}
Taking dual, we obtain the following exact sequence
\begin{equation}\label{ecf}
0\, \longrightarrow \, \Omega_{X/Y}^{\vee} \, \longrightarrow \, \Omega_X^{\vee} \, \overset{df}{\longrightarrow} \, \left(\,f^*  (\, \Omega_{Y}\,)\, \right)^{\vee}\, \longrightarrow \, \mathcal{C}_f \, \longrightarrow \, 0.
\end{equation}

\begin{defn}\label{de1}
The cokernel $\mathcal{C}_f$ of $df$ is called the critical module. The map $f$ is said to be \textit{submersive}
if $\mathcal{C}_f\,=\,0$ (see \cite[p.~160]{Kallstrom}).
\end{defn}

We now recall Corollary 4.9 of \cite{Kallstrom}. Note that \cite[Corollary 4.9]{Kallstrom}
is in a much more general setting. The following is a weaker version of it.

\begin{lmm}\label{submersive lemma}
Let $X/k$ be a non-singular variety,
and let $Y/k$ be a normal variety. Let $f\,:\, X \, \longrightarrow \, Y$ be a birational morphism.
Furthermore, assume that at least one of the following assumptions holds:
\begin{itemize}
\item $Y$ is locally a complete intersection.
\item $Y$ is $\mathbb{Q}$--factorial.
\end{itemize}
Then the following statements are equivalent:
\begin{enumerate}
\item $f$ is submersive.
\item $f$ is \'{e}tale.
\end{enumerate}
\end{lmm}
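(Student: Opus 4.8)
The plan is to treat the two implications separately, with essentially all the content lying in $(1)\Rightarrow(2)$. The common starting point is the dualized cotangent sequence \eqref{ecf}. Since $f$ is birational, the function fields agree and $\Omega_{X/Y}$ vanishes at the generic point of $X$; hence $\Omega_{X/Y}$ is a torsion sheaf and $\Omega_{X/Y}^{\vee}=0$. Consequently \eqref{ecf} collapses to a short exact sequence \(0 \to \Omega_X^{\vee} \xrightarrow{df^{\vee}} (f^*\Omega_Y)^{\vee} \to \mathcal{C}_f \to 0\), so that $f$ is submersive precisely when $df^{\vee}\colon \Omega_X^{\vee} \to (f^*\Omega_Y)^{\vee}$ is an isomorphism.

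For $(2)\Rightarrow(1)$: if $f$ is \'{e}tale, the canonical map $df\colon f^*\Omega_Y \to \Omega_X$ is an isomorphism (the relative cotangent complex vanishes, so $f^*\Omega_Y \cong \Omega_X$); dualizing shows $df^{\vee}$ is an isomorphism and $\mathcal{C}_f=0$. This direction uses neither birationality nor the hypotheses on $Y$, which is consistent with those hypotheses being needed only for the converse.

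For $(1)\Rightarrow(2)$, I would first reduce ``\'{e}tale'' to ``unramified''. If $\Omega_{X/Y}=0$ then $f$ is unramified, hence quasi-finite; being moreover birational with $Y$ normal, Zariski's Main Theorem (cf.\ \cite[Chapter III.9]{Red_book}) forces $f$ to be an open immersion, and open immersions are \'{e}tale. So it suffices to show $\Omega_{X/Y}=0$. Assuming $f$ submersive, $df^{\vee}$ is an isomorphism, hence so is $df^{\vee\vee}$. As $\Omega_X$ is locally free, its biduality map is the identity, so naturality of biduality gives $df = df^{\vee\vee}\circ \eta$, where $\eta\colon f^*\Omega_Y \to (f^*\Omega_Y)^{\vee\vee}$ is the canonical map. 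Therefore $\Omega_{X/Y}=\operatorname{coker}(df)\cong \operatorname{coker}(\eta)$. The cokernel of the biduality map of any coherent sheaf on the smooth, hence normal, variety $X$ is supported in codimension at least $2$: it factors through the torsion-free quotient, and at a codimension-one point $\mathcal{O}_{X,\xi}$ is a discrete valuation ring over which that quotient is free, so $\eta$ is surjective there. Thus the ramification locus $\operatorname{Supp}(\Omega_{X/Y})$ has codimension at least $2$ in $X$.

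It remains to upgrade ``codimension $\geq 2$'' to ``empty'', and this is the crux where the hypotheses on $Y$ become indispensable. I would invoke purity of the branch (equivalently, ramification) locus: for a quasi-finite dominant morphism with regular source it is pure of codimension one whenever nonempty, provided the target is regular. Since $Y$ is only normal, the classical Zariski--Nagata purity does not apply verbatim; the role of the lci and $\mathbb{Q}$-factorial hypotheses --- and the substance of \cite[Corollary 4.9]{Kallstrom} --- is that a suitable purity statement persists under each of them (through the d.c.i.\ condition and perfectness of the cotangent complex in the lci case, and through the torsion of the divisor class group in the $\mathbb{Q}$-factorial case). Granting this, $\operatorname{Supp}(\Omega_{X/Y})$ would be simultaneously pure of codimension one and of codimension at least two, hence empty, giving $\Omega_{X/Y}=0$. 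I expect this purity input to be the main obstacle: the homological reduction to a codimension count is routine, whereas securing the correct purity theorem under merely lci or $\mathbb{Q}$-factorial assumptions is the genuinely hard ingredient.
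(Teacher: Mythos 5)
Everything you actually carry out is correct: the collapse of \eqref{ecf} to a short exact sequence using that $\Omega_{X/Y}$ is torsion when $f$ is birational (so submersive means $\Omega_X^{\vee}\to (f^*\Omega_Y)^{\vee}$ is an isomorphism), the direction $(2)\Rightarrow(1)$, the biduality argument identifying $\Omega_{X/Y}$ with $\mathrm{coker}\,(\eta)$ and hence placing the ramification locus in codimension at least $2$, and the closing chain unramified $\Rightarrow$ quasi-finite $\Rightarrow$ open immersion (Zariski's Main Theorem plus birationality) $\Rightarrow$ \'etale. You have also correctly located the crux in a purity statement over a merely normal base and correctly attributed it to K\"allstr\"om. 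But as a proof this stops exactly at that crux: you ``grant'' the purity input rather than establish it, and you never verify that the hypotheses of any precise statement in \cite{Kallstrom} hold in the two cases of the lemma. That verification is not automatic, and it is, in fact, the \emph{entire content} of the paper's proof: in the lci case one must know that $Y$ is d.c.i., which requires Kunz's theorem \cite[Corollary 9.4]{Kunz_Kahler_differentials} that stalks of $\Omega_Y$ on a local complete intersection have projective dimension at most $1$ (while $X$, being non-singular, is d.c.i. for free); then \cite[Corollary 4.9(e)]{Kallstrom} yields the equivalence. In the $\mathbb{Q}$--factorial case one must know that $\mathbb{Q}$--factoriality implies K\"allstr\"om's condition $\left(\mathbf{W}\right)$ (a fact recorded on page 173 of \cite{Kallstrom}); then \cite[Corollary 4.9(a)]{Kallstrom} yields the equivalence.

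Two further remarks on how your route compares with the paper's. First, since \cite[Corollary 4.9]{Kallstrom} is quoted for the full equivalence of ``submersive'' and ``\'etale'' once its hypotheses are checked, your preliminary reductions --- though correct --- are subsumed by the citation and do no independent work in this lemma; the proof reduces to hypothesis-checking plus quotation. Second, your reduction pattern (purity gives pure codimension one; a separate argument gives codimension at least two; hence the locus is empty) is genuinely the right instinct, but it is the mechanism the paper deploys elsewhere, namely in the proof of the fourth case of \Cref{automorphism restriction result}, where \cite[Theorem 4.6]{Kallstrom} is applied and its three hypotheses are checked one by one. If you wanted to complete your version of the argument along those lines for the present lemma, you would face exactly the same obligation: verifying the conditions of Theorem 4.6 (or of Corollary 4.9) under the lci or $\mathbb{Q}$--factorial assumption, which brings you back to Kunz's theorem and to condition $\left(\mathbf{W}\right)$. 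So the missing ingredient is not a new idea but the concrete hypothesis verification that converts your appeal to ``a suitable purity statement'' into an actual citation.
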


\begin{proof}
Assume that $Y/k$ is locally a complete intersection. The projective dimension of the stalks of $\Omega_Y$
is at most one, which follows from \cite[Corollary 9.4]{Kunz_Kahler_differentials}. Thus $Y$ is d.c.i.
Being non-singular, $X$ is also d.c.i. The equivalence of the two statements follows from
\cite[Corollary 4.9(e)]{Kallstrom}.

Assume that $Y$ is $\mathbb{Q}$--factorial. It follows that $Y$ satisfies the condition
$\left(\mathbf{W}\right)$ described in lines 10--11, from below, on page 173 of
\cite{Kallstrom} (see lines 8--9, from below, on  page 173 of \cite{Kallstrom} for the
fact that $\mathbb{Q}$--factorial varieties satisfy condition $\left(\mathbf{W}\right)$). The
equivalence of the two statements now follows from \cite[Corollary 4.9(a)]{Kallstrom}.
\end{proof}

In our situation, we have the following.

\begin{lmm}\label{submersive endomorphism lemma}
Let $X, \, Y$ and $\phi$ be as in \Cref{conv1}, with $\mathrm{codim}_XY\, \geq \, 2$. Then $\phi$ is submersive.
\end{lmm}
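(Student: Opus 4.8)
We want to show that $\phi$ is submersive, i.e.\ that the critical module $\mathcal{C}_\phi$ in the exact sequence \eqref{ecf} vanishes. The plan is to exploit the fact that $\phi_1 := \phi|_U : U \to V$ is an isomorphism, so that all the sheaf maps in the Kähler differential sequence for $\phi$ become isomorphisms away from $Y$, and then use the codimension-$2$ hypothesis together with \Cref{isomorphism_of_vector_bundles_lemma} to propagate this to all of $X$.

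\begin{proof}
We continue with the notation of Convention \ref{conv1}. Consider the exact sequence \eqref{ecf} for the endomorphism $\phi$:
\begin{equation*}
0\, \longrightarrow \, \Omega_{X/X}^{\vee} \, \longrightarrow \, \Omega_X^{\vee} \, \overset{d\phi}{\longrightarrow} \, \left(\,\phi^*  (\, \Omega_{X}\,)\, \right)^{\vee}\, \longrightarrow \, \mathcal{C}_\phi \, \longrightarrow \, 0.
\end{equation*}
By \Cref{rem1}, the restriction $\phi_1 = \phi|_U : U \to V$ is an isomorphism of open subvarieties of $X$. Hence the natural map $\phi^*(\Omega_X)|_U \to \Omega_X|_U$ is an isomorphism, and so is its dual $d\phi|_U : \Omega_X^\vee|_U \to (\phi^*(\Omega_X))^\vee|_U$. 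Consequently the restriction of $\mathcal{C}_\phi$ to $U = X \setminus Y$ vanishes; that is, $\mathcal{C}_\phi$ is supported on $Y$, whose codimension is at least $2$.

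The heart of the argument is to upgrade this support condition to actual vanishing. Dualizing guarantees that the sheaf $(\phi^*(\Omega_X))^\vee$ is torsion-free (being the dual of a coherent sheaf on a normal variety, hence reflexive). Let $\beta$ denote the composite $\Omega_X^\vee \overset{d\phi}{\longrightarrow} (\phi^*(\Omega_X))^\vee$, and consider the image subsheaf $\mathrm{Im}(\beta) \subseteq (\phi^*(\Omega_X))^\vee$, so that $\mathcal{C}_\phi = (\phi^*(\Omega_X))^\vee / \mathrm{Im}(\beta)$. On $U$ the inclusion $\mathrm{Im}(\beta)|_U \hookrightarrow (\phi^*(\Omega_X))^\vee|_U$ is an isomorphism. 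I expect the cleanest route is to apply \Cref{isomorphism_of_vector_bundles_lemma}(2): the target $(\phi^*(\Omega_X))^\vee$ is reflexive (a dual is always reflexive on a normal variety), and one checks the natural map $\Omega_X^\vee \to (\phi^*(\Omega_X))^\vee$ restricts to an isomorphism over $X \setminus Y$; since the codimension of $Y$ is at least $2$, the map is then an isomorphism over all of $X$, forcing its cokernel $\mathcal{C}_\phi$ to be zero. By Definition \ref{de1}, $\phi$ is submersive.

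\textbf{Main obstacle.} The delicate point is the interplay of reflexivity and torsion-freeness needed to invoke \Cref{isomorphism_of_vector_bundles_lemma}. The sheaf $\Omega_X^\vee$ is reflexive as a dual, and $(\phi^*(\Omega_X))^\vee$ is torsion-free; what must be verified is that $d\phi$, rather than merely an arbitrary map agreeing with an isomorphism off $Y$, is genuinely the restriction of a map to which part (2) applies, and that its restriction to $X \setminus Y$ is truly an isomorphism and not just injective with torsion cokernel. This hinges precisely on $\phi_1$ being an isomorphism, as recorded in \Cref{rem1}, which is exactly what makes $d\phi_1$ invertible on $U$. Once this identification is in place, the codimension-$2$ hypothesis does the remaining work through the normality (equivalently, the $S_2$ property) of reflexive sheaves.
\end{proof}
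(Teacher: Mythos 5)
Your proposal is correct and follows essentially the same route as the paper: both arguments dualize the first fundamental sequence, observe that $d\phi\big\vert_U$ is the dual of the isomorphism $\phi^*(\Omega_X)\big\vert_U \to \Omega_X\big\vert_U$ induced by $\phi_1$, and then invoke \Cref{isomorphism_of_vector_bundles_lemma}(2) together with the reflexivity of duals (\cite[Corollary 1.2]{stable_reflexive_sheaves}) to conclude that $d\phi$ is an isomorphism on all of $X$, whence $\mathcal{C}_\phi = 0$. The detour through the image subsheaf $\mathrm{Im}(\beta)$ is unnecessary but harmless, since you ultimately apply the lemma directly to $d\phi$ exactly as the paper does.
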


\begin{proof}
We continue with the notation of \Cref{conv1}. Consider the first fundamental sequence of K\"{a}hler differentials
\begin{align*}
\phi^*  (\, \Omega_{X}\,) \, \overset{\alpha}{\longrightarrow} \, \Omega_{X} \, \longrightarrow \, \Omega_{X/X}\, \longrightarrow 0.
\end{align*}
Its restriction
$$\phi_1\ := \ \phi\big\vert_U \ : \ U \ \longrightarrow \ V$$ is given to be an isomorphism. This yields
that the restriction 
$$\alpha_1\ := \ \alpha\big\vert_U \ : \ \phi^* (\, \Omega_{X}\,)\big\vert_U \ \longrightarrow \
\Omega_X\big\vert_U$$ is an isomorphism as well. So $\alpha_1^{\vee}$ --- the $\mathcal{O}_U$--dual of
$\alpha_1$ --- is an isomorphism. The homomorphism $\alpha_1^{\vee}$ evidently coincides
with the restriction of $$d\phi \ : \ \Omega_{X}^{\vee} \ \longrightarrow \ (\phi^*(\, \Omega_{X}\,) )^{\vee}$$
to $U$. Since the duals of $\phi^*(\, \Omega_{X}\,)$ and $\Omega_X$ are reflexive (see
\cite[Corollary 1.2]{stable_reflexive_sheaves}), from \Cref{isomorphism_of_vector_bundles_lemma}
it follows that $d\phi$ is an isomorphism. Thus $\mathcal{C}_{\phi}$ vanishes (see \eqref{ecf}). So
$\phi$ is submersive (see Definition \ref{de1}).
\end{proof}

\begin{proof}[{Proof of the second and third cases of \Cref{automorphism restriction result}}]
We continue with the earlier notation and conventions. Consider the restriction map 
\begin{equation*}
\phi_3\ := \ \phi\big\vert_{W} \ : \ W \ \longrightarrow \ X.
\end{equation*} 
By \Cref{submersive endomorphism lemma}, the map $\phi_3$ is submersive. Apply \Cref{submersive lemma}
to conclude that $\phi_3$ is \'{e}tale, in both the cases. In particular, $\phi_3$ is quasi-finite. By
Zariski's Main Theorem, $\phi_3$ is an open immersion. Thus $\phi_3(W)$ (which is the same as $\phi(W)$)
is contained inside $W$. We obtain that $\phi_3$ is an injective
endomorphism of the smooth variety $W$, and hence $\phi_3$ is an automorphism, by the theorem of Ax. 
\end{proof}

We will now prove the final part of Theorem \ref{automorphism restriction result}.

\begin{proof}[{Proof of the fourth case of \Cref{automorphism restriction result}}]
We continue with the notation of \Cref{conv1}. In this case, $c$ is assumed to be $3$. That is, $\phi$ is 
injective at all points of codimensions $0$, $1$ and $2$. Consequently, the map $\phi_x$ in 
\eqref{stalk_map} is an isomorphism for all points $x$ of codimension up to $2$.

Consider the restriction map 
\begin{equation*}
\phi_3\ := \ \phi\big\vert_{W} \ : \ W \ \longrightarrow \ X.
\end{equation*}
Our goal is to apply \cite[Theorem 4.6]{Kallstrom} (with $S\ = \ \rm{spec} \ k$) to $\phi_3$. The
hypotheses of \cite[Theorem 4.6]{Kallstrom} are checked one by one. Since $\phi_3$ is a birational morphism of 
normal varieties defined over $k$ (which is algebraically closed of characteristic zero), it satisfies the
condition $\left(\mathbf{F}\right)$ of \cite[Theorem 4.6]{Kallstrom} (condition $\left(\mathbf{F}\right)$ is
described in lines 5--6, from below, on page 173 of \cite{Kallstrom}). As $W$ is non-singular, the ramification 
locus $B_{X/k}$ (see \Cref{ramification locus}) is empty. The first condition of \cite[Theorem 4.6]{Kallstrom} is
satisfied by definition and the convention that $\mathrm{codim}^-_X \ \emptyset \ = \ \infty$. For more details about the notation, see
\cite[p. 159]{Kallstrom}.

For the second condition of \cite[Theorem 4.6]{Kallstrom}, observe that the stalk map $\left(\phi_3\right)_x$ 
in \eqref{stalk_map} is an isomorphism for all point $x$ of height up to $2$ (as $c$ is given to be $3$). 
Thus $\phi_3(x)$ is a regular point of $X$ for such a point $x$. Consequently, for any point $x$ of height at 
most two, $\Omega_{X, \phi_3(x)}$ is a free module. Therefore, the second condition is automatically 
satisfied.

The third condition of \cite[Theorem 4.6]{Kallstrom} is more subtle. Consider the following exact sequence of 
K\"{a}hler differentials
\begin{equation}\label{exact sequence of differentials}
\phi_3^*(\, \Omega_{X}\,) \, \overset{\alpha}{\longrightarrow} \, \Omega_W \, \longrightarrow \, \Omega_{W/X}\, \longrightarrow 0.
\end{equation}
Following the notation of \cite{Kallstrom} (see (1.2) and (1.3) of page 159), \eqref{exact sequence of differentials} splits into two short exact sequences
\begin{align}
0 \ \longrightarrow \ \Gamma \ \longrightarrow \phi_3^*(\, \Omega_{X}\,)\, \longrightarrow \, \mathcal{V} \ \longrightarrow \ 0, \label{ses_temp}\\
0 \, \longrightarrow \, \mathcal{V} \ \longrightarrow \ \Omega_W \, \longrightarrow \, \Omega_{W/X}\, \longrightarrow 0,
\end{align}
where $\Gamma\,=\,\text{kernel}(\alpha)$ and ${\mathcal V}\,=\, \text{Image}(\alpha)$.
As $W$ is irreducible and non-singular, the generic point is the only associated point of $W$, and $\phi_3$ is smooth at that
point (because $\phi_3$ is birational). By \cite[Proposition 2.13]{Kallstrom}, we have $\Gamma\, =\, 0$.
Therefore, from \eqref{ses_temp} it follows that $\mathcal{V} \ = \ \phi_3^*(\, \Omega_{X}\,)$; also,
\eqref{exact sequence of differentials} modifies to the following short exact sequence
\begin{align}\label{ses_differentials}
0 \ \longrightarrow \ \phi_3^*(\, \Omega_{X}\,)\, \overset{\alpha}{\longrightarrow} \,
\Omega_W \, \longrightarrow \, \Omega_{W/X}\, \longrightarrow 0.
\end{align}
As $\Gamma\,=\, 0$, in particular $\Gamma$ is locally free, from the second sentence
of Remark 4.7 on page 175 of \cite{Kallstrom}
we conclude that the third condition of \cite[Theorem 4.6]{Kallstrom} is satisfied.

It follows from \cite[Theorem 4.6]{Kallstrom} that the codimension of the irreducible components
of $B_{\phi_3}$ are at most $1$ (the definition of $\rm{codim}^+_X \
B_{\phi_3}$ is given in \cite[p. 159]{Kallstrom}). On the other hand, the stalk map $\alpha_x$, in \eqref{ses_differentials}, is an
isomorphism for all points $x$ of codimension up to $2$. Hence we have
$$\mathrm{codim} \ \mathrm{supp} \ \left(\Omega_{W/X}\right)\ \geq \ 3.$$ So, $B_{\phi_3}$ is
supported on points of codimension at least $3$. This yields that
$$B_{\phi_3} \ = \ \emptyset.$$
Therefore $\Omega_{W/X}$ is a locally free sheaf of constant rank, and the rank is zero, because
it was shown to be zero on the points of $U \cap W$. Thus $\phi_3$ --- being unramified --- is quasi-finite.
As $\phi_3$ is birational, it is an open immersion, by Zariski's
Main Theorem. Thus $\mathrm{Image}\, (\phi_3)\ \subseteq\ W$, and
$$\phi_3\ : \ W \ \longrightarrow \ W$$ is an injective map, and hence $\phi_3$ is an automorphism.
\end{proof}

\section{Endomorphisms of locally complete intersection varieties}\label{section fourth case}

In this section, the remaining part of \Cref{main_thm_codim_case} will be proved. This is based on homological approach. For the discussions of homological concepts such as depth, homological dimensions, Serre's conditions $(\mathrm{R}_k)$ and $(\mathrm{S}_k)$, we refer to the standard textbook \cite{Bruns-Herzog}. The projective dimension of a module $M$ over a local ring $(R,\mathfrak{m},k)$ is denoted by $\mathrm{pd}_R\, M$, and the $\mathfrak{m}$-depth of $M$ is denoted by $\mathrm{depth}\ M$.

\begin{lmm}\label{projective dimension lemma}
Let $R, \, S$ be two local domains, and let $f\, :\, R \, \longrightarrow \, S$ be an injective
homomorphism of local rings. Given a $R$-module $M$, if $\mathrm{pd}_R\,M\, \leq \, 1$, then
$$\mathrm{pd}_S\,M \otimes_RS\ \leq \ 1.$$
\end{lmm}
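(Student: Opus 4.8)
The plan is to pull back a short free resolution of $M$ over $R$ along $f$ and check that it remains a resolution over $S$. Since all modules are assumed finitely generated and $\mathrm{pd}_R\, M \leq 1$, I can choose a finite free resolution of length at most one,
\[
0 \,\longrightarrow\, R^a \,\overset{\psi}{\longrightarrow}\, R^b \,\longrightarrow\, M \,\longrightarrow\, 0,
\]
where $\psi$ is represented by a $b \times a$ matrix with entries in $R$. (If $M$ is free, or zero, the statement is immediate, so I may assume the left-hand term is nonzero.) Applying the right-exact functor $-\otimes_R S$ produces the exact sequence
\[
S^a \,\overset{\psi\otimes S}{\longrightarrow}\, S^b \,\longrightarrow\, M\otimes_R S \,\longrightarrow\, 0,
\]
in which $\psi \otimes S$ is represented by the matrix obtained from that of $\psi$ by applying $f$ to each entry. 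Thus it suffices to prove that $\psi \otimes S$ is injective; the displayed sequence is then a free resolution of $M\otimes_R S$ of length one, giving $\mathrm{pd}_S(M\otimes_R S) \leq 1$.

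The crux is the injectivity of $\psi \otimes S$, and this is exactly where the hypotheses that $f$ is injective and that $R,\, S$ are domains enter. First I would record the standard fact that, over a domain, a map between finite free modules is injective if and only if its matrix has a nonvanishing $a\times a$ minor; this follows by passing to the fraction field, where injectivity is equivalent to maximal rank and localization is exact. Since $\psi$ is injective, some $a\times a$ minor $m \in R$ of its matrix is nonzero. Because a determinant is a polynomial in the matrix entries and $f$ is a ring homomorphism, the corresponding $a\times a$ minor of the matrix of $\psi\otimes S$ equals $f(m)$. As $f$ is injective and $m\neq 0$, we obtain $f(m)\neq 0$ in the domain $S$; hence the matrix of $\psi\otimes S$ has a nonvanishing maximal minor, so $\psi\otimes S$ is injective. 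This completes the argument.

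The only genuine obstacle is this injectivity step. The potential pitfall is that tensoring along a morphism that is not flat need not preserve injectivity in general (the obstruction being $\mathrm{Tor}_1^R(M,\, S)$), so one cannot simply invoke flatness of $f$. The minor argument sidesteps this entirely by using injectivity of $f$ together with the domain hypotheses to guarantee that the single nonzero maximal minor certifying injectivity of $\psi$ survives under $f$. I would also take care to dispose of the degenerate cases $M = 0$ and $M$ free (where $a = 0$) at the outset, since there the conclusion is trivial.
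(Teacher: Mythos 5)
Your proof is correct, and it shares the paper's overall skeleton --- tensor a length-one free resolution $0 \,\to\, F_1 \,\to\, F_0 \,\to\, M \,\to\, 0$ with $S$ and show that the left-hand map stays injective --- but you execute the key injectivity step by a genuinely different argument. The paper identifies the kernel of $F_1\otimes_R S \,\to\, F_0\otimes_R S$ with $\mathrm{Tor}^R_1(S,\,M)$, observes that this is a torsion $R$-module because $R$ is a domain (it vanishes at the generic point), while $F_1\otimes_R S$ is a torsion-free $R$-module because $f$ is injective and $S$ is a domain; hence the Tor module vanishes and the tensored sequence is exact. You avoid Tor altogether: you invoke the determinantal criterion that a map of finite free modules over a domain is injective exactly when some maximal minor is nonzero, note that minors are integer polynomials in the matrix entries so that $f$ carries the nonzero certifying minor $m$ to $f(m)\,\neq\, 0$, and then convert back to injectivity of $\psi\otimes S$ using that $S$ is a domain. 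Both arguments are, at bottom, ``pass to the fraction field'' arguments and use precisely the same hypotheses (injectivity of $f$, both rings domains); the paper's version is slicker and matches the homological flavor of the section in which the lemma sits, while yours is more elementary, sidesteps $\mathrm{Tor}$ entirely, and makes completely explicit where each hypothesis enters. One small point you handle correctly but implicitly: the existence of the finite free resolution $0 \,\to\, R^a \,\to\, R^b \,\to\, M \,\to\, 0$ uses that $M$ is finitely generated over the Noetherian local ring $R$, so that the kernel of $R^b \,\to\, M$ is finitely generated projective, hence free; this is covered by the paper's standing conventions, which you cite.
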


\begin{proof}
If $\mathrm{pd}_R\,M\, = \, 0$, then $\mathrm{pd}_S\,M \otimes_RS\,=\, 0$.
Assume that $\mathrm{pd}_R\,M\, = \, 1$.
Let 
\begin{equation*}
0 \, \longrightarrow \, F_1 \, \longrightarrow \, F_0 \, \longrightarrow \, M \, \longrightarrow\, 0
\end{equation*}
be a minimal free resolution of $M$. Tensoring it with $S$ yields the following exact sequence of $R$-modules
\begin{equation*}
0\, \longrightarrow \,\mathrm{Tor}^R_1(S,\,M) \, \longrightarrow \, F_1\otimes_RS \, \longrightarrow \,
F_0\otimes_RS \, \longrightarrow \, M \otimes_R S \, \longrightarrow\, 0.
\end{equation*}
Since $R$ is assumed to be a domain, $\mathrm{Tor}^R_1(S,\,M)$ is a torsion $R$-module. On the other hand,
as $f$ is injective, $F_1\otimes_RS$ --- being free as an $S$--module --- is a torsion-free $R$--module.
Thus $\mathrm{Tor}^R_1(S,\,M)\, = \,0$. The lemma follows. 
\end{proof}

The following result will be helpful for our purpose.

\begin{lmm}\label{reflexivity lemma}
Let $X$ be an algebraic variety defined over $k$ such that $X$ is locally a complete intersection.
Let $\mathcal{F}$ be a coherent sheaf on $X$ with
$$\mathrm{pd}_{\mathcal{O}_{X,x}} \mathcal{F}_x\ \leq \ 1$$ for each point $x$ of $X$.
Also, assume that $\mathcal{F}_x$ is free for all points $x$ of $X$ of codimension at most $c$.
Then $\mathcal{F}$ is torsion-free if $c \ = \ 1$, and it is reflexive if $c \ = \ 2$.
\end{lmm}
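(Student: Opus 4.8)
The plan is to reduce the statement to a stalk-wise depth computation and then apply Serre's criteria for torsion-freeness and reflexivity. Both properties are local and are detected by the Serre conditions $(\mathrm{S}_1)$ and $(\mathrm{S}_2)$ on the stalks (see \cite{Bruns-Herzog}), so it suffices to bound $\mathrm{depth}\,\mathcal{F}_x$ from below at every point $x$ of $X$. Since $X$ is locally a complete intersection, each local ring $R := \mathcal{O}_{X,x}$ is a complete intersection, hence Gorenstein and in particular Cohen--Macaulay; thus $\mathrm{depth}\,R = \dim R$, which equals the codimension of the point $x$, that I denote by $c_x$.

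The single estimate driving everything comes from the Auslander--Buchsbaum formula. Whenever $\mathcal{F}_x \neq 0$, the hypothesis $\mathrm{pd}_R\,\mathcal{F}_x \leq 1$ gives
$$
\mathrm{depth}\,\mathcal{F}_x \ = \ \mathrm{depth}\,R - \mathrm{pd}_R\,\mathcal{F}_x \ \geq \ c_x - 1 .
$$
In addition, at points $x$ with $c_x \leq c$ the stalk $\mathcal{F}_x$ is free by hypothesis, so there $\mathrm{depth}\,\mathcal{F}_x = \mathrm{depth}\,R = c_x$. These two facts are the only inputs.

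For $c = 1$ I would check $(\mathrm{S}_1)$, that is, $\mathrm{depth}\,\mathcal{F}_x \geq \min(1,\,c_x)$ at every $x$: for $c_x \leq 1$ freeness makes this immediate, while for $c_x \geq 2$ the estimate above yields $\mathrm{depth}\,\mathcal{F}_x \geq c_x - 1 \geq 1$. Hence $\mathcal{F}$ satisfies $(\mathrm{S}_1)$ and is torsion-free. For $c = 2$ the same reasoning (now freeness holds in codimension $\leq 1$) first gives torsion-freeness, and then I would verify $(\mathrm{S}_2)$, i.e. $\mathrm{depth}\,\mathcal{F}_x \geq \min(2,\,c_x)$: for $c_x \leq 2$ freeness gives $\mathrm{depth}\,\mathcal{F}_x = c_x = \min(2,\,c_x)$, and for $c_x \geq 3$ the estimate gives $\mathrm{depth}\,\mathcal{F}_x \geq c_x - 1 \geq 2$.

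The delicate step --- the main obstacle --- is the implication ``torsion-free and $(\mathrm{S}_2)$ $\Rightarrow$ reflexive.'' This is precisely where the freeness hypothesis in codimension $2$ is indispensable: without it the estimate only gives $\mathrm{depth}\,\mathcal{F}_x \geq c_x - 1 = 1$ at codimension-$2$ points, which is too weak for $(\mathrm{S}_2)$. To conclude reflexivity I would invoke the standard criterion that a torsion-free coherent sheaf satisfying $(\mathrm{S}_2)$ over a Cohen--Macaulay variety is reflexive, the Cohen--Macaulayness being supplied by the lci hypothesis; alternatively one can argue directly with the four-term dual sequence attached to a length-one free resolution of $\mathcal{F}_x$, showing that the cokernel of $\mathcal{F} \to \mathcal{F}^{\vee\vee}$ is supported in codimension $\geq 3$ yet forced to vanish by the depth bound. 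Either route completes the proof.
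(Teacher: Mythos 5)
Your proof is correct and takes essentially the same route as the paper's: both use Cohen--Macaulayness of the lci local rings, the Auslander--Buchsbaum formula combined with the codimension-$\leq c$ freeness hypothesis to bound $\mathrm{depth}\,\mathcal{F}_x$ stalk-wise, and then conclude via Serre-type criteria (the paper cites \cite[\href{https://stacks.math.columbia.edu/tag/0AUV}{Tag 0AUV}]{stacks-project} for torsion-freeness and \cite[Proposition 1.3]{stable_reflexive_sheaves} for reflexivity). The one caveat is that your stated criterion ``torsion-free and $(\mathrm{S}_2)$ implies reflexive'' is not valid over an arbitrary Cohen--Macaulay variety --- one needs Gorenstein in codimension one --- but since locally complete intersection rings are Gorenstein, your application is sound and matches the hypotheses of the reference the paper invokes.
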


\begin{proof}
Since $X$ is locally a complete intersection, for each point $x$ of $X$ the depth and the dimension of $\mathcal{O}_{X,x}$ coincide. 

Assume that $c \ = \ 1$. Let $x$ be a point of codimension at least $2$. Applying the Auslander--Buchsbaum 
formula (cf. \cite[Theorem 1.3.3]{Bruns-Herzog}), it follows that $$\mathrm{depth}\ \mathcal{F}_x \ \geq 1.$$ Also the depth of $\mathcal{F}_x$ is $1$ 
for all codimension $1$ points $x$, and $\mathrm{depth}\ \mathcal{F}_x \ = \ 0$ if and only if $x$ is the 
generic point of $X$. In other words, the generic point of $X$ is the only associated point of $\mathcal{F}_x$, and $\mathcal{F}_x$ has the property $(\mathrm{S}_1)$ (cf. \cite[p. 63]{Bruns-Herzog}). 
Now we apply \cite[\href{https://stacks.math.columbia.edu/tag/0AUV}{Tag 0AUV}]{stacks-project} to 
$\mathcal{F}_x$, and conclude that it is torsion-free for each point $x$ of $X$. Thus $\mathcal{F}$ is 
torsion-free.

Next, assume that $c\ =\ 2$. Let $x$ be a point of $X$ such that $\mathrm{depth}\, \mathcal{O}_{X,x}\, = \ 2$.
Then $\mathcal{F}_x$ is free; hence we have $$\mathrm{depth}\ \mathcal{F}_x\ = \ 2$$ by the Auslander--Buchsbaum
formula. Next, if $$\mathrm{depth}\, \mathcal{O}_{X,x}\ \geq \ 3,$$ then applying the Auslander--Buchsbaum
formula,
$$\mathrm{depth}\ \mathcal{F}_x\ \geq \ 2.$$ Now the reflexivity of $\mathcal{F}$ follows from \cite[Proposition 
1.3]{stable_reflexive_sheaves}.
\end{proof}

We are now ready to prove the main theorem of this section.

\begin{thm}\label{th3}
Let $X,\, Y$ and $\phi$ be as in \Cref{main_thm_codim_case}. Furthermore, assume that $X$ is locally a complete
intersection satisfying the condition $\left(\mathrm{R}_2\right)$. Then $\phi$ is an automorphism.
\end{thm}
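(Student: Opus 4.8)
The plan is to prove that $\phi$ is quasi-finite and then to conclude with \Cref{quasi-finite to automorphism}. As in the non-singular case (\Cref{th1}), quasi-finiteness will follow once I show that $\phi$ is unramified, i.e. that the cokernel $\Omega_{X/X}$ of the map
$$
\alpha\ :\ \phi^*(\Omega_X)\ \longrightarrow\ \Omega_X
$$
from the first fundamental sequence vanishes. Equivalently, I aim to show that $\alpha$ is an isomorphism. Its restriction $\alpha|_U$ is already an isomorphism because $\phi_1\colon U\to V$ is (Convention \ref{conv1} and \Cref{rem1}), and $\mathrm{codim}_X(X\setminus U)=\mathrm{codim}_X Y\ge 2$. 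Thus, by \Cref{isomorphism_of_vector_bundles_lemma}(2), the whole statement reduces to two claims: that $\Omega_X$ is torsion-free, and that $\phi^*(\Omega_X)$ is reflexive.

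The first claim is immediate from \Cref{reflexivity lemma} with $c=2$: since $X$ is locally a complete intersection, $\mathrm{pd}_{\mathcal{O}_{X,x}}(\Omega_X)_x\le 1$ for every $x$ (as recorded in the proof of \Cref{submersive lemma}), and the $(\mathrm{R}_2)$ hypothesis makes $(\Omega_X)_x$ free at every point of codimension at most $2$; hence $\Omega_X$ is reflexive, in particular torsion-free. For the second claim I would again apply \Cref{reflexivity lemma} with $c=2$ to $\mathcal{F}=\phi^*(\Omega_X)$. The hypothesis $\mathrm{pd}_{\mathcal{O}_{X,x}}(\phi^*\Omega_X)_x\le 1$ holds at every $x$: writing $(\phi^*\Omega_X)_x=(\Omega_X)_{\phi(x)}\otimes_{\mathcal{O}_{X,\phi(x)}}\mathcal{O}_{X,x}$, the stalk map $\phi_x$ is an injective local homomorphism of local domains by \Cref{injectivity of stalk maps}, so \Cref{projective dimension lemma} propagates the bound $\mathrm{pd}\le 1$ from $(\Omega_X)_{\phi(x)}$ to $(\phi^*\Omega_X)_x$. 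Freeness at points of codimension at most $1$ is automatic: as $Y$ has codimension $\ge 2$, every such point lies in $U$, where $\phi$ is a local isomorphism onto the $(\mathrm{R}_2)$-regular locus, so $\phi(x)$ is regular and $(\phi^*\Omega_X)_x$ is free.

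The crux is therefore to prove freeness of $(\phi^*\Omega_X)_x$ at points $x$ of codimension exactly $2$ — equivalently, that $\phi(x)$ is a regular point of $X$. For $x\in U$ this is clear as above, so the only remaining case is that of a generic point $x$ of a codimension-$2$ component of $Y$. Here $\phi(x)\in Z$ by \Cref{preimage_preserving_lemma}, and $\mathrm{codim}_X Z=\mathrm{codim}_X Y\ge 2$ by Jarden's theorem (\Cref{equal_dim_lemma}); so $\phi(x)$ has codimension $\ge 2$ and, since $X$ is $(\mathrm{R}_2)$, it is regular unless $\mathrm{codim}_X\phi(x)\ge 3$. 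The latter would mean that $\phi$ contracts the codimension-$2$ subvariety $\overline{\{x\}}$ into the singular locus $S$ (which has codimension $\ge 3$), a blow-down-type phenomenon, and ruling it out is the main obstacle. I expect to exclude it by means of Källström's purity theorem for differentially-complete-intersection morphisms: because $X$ is locally a complete intersection it is d.c.i., so \cite[Theorem 4.6]{Kallstrom} constrains the ramification locus $B_\phi$ to have codimension at most $1$, whereas the isomorphism $\alpha|_U$ forces $\Omega_{X/X}$, and hence $B_\phi$, to be supported in codimension $\ge 2$; this contradiction gives $B_\phi=\emptyset$ and simultaneously the desired freeness. The genuine work — and the step I expect to be most delicate — is verifying the hypotheses of \cite[Theorem 4.6]{Kallstrom} for $\phi$ (in particular its more subtle third condition), for which the submersivity of $\phi$ (\Cref{submersive endomorphism lemma}) and the projective-dimension computation above are the natural inputs. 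Once freeness at codimension $2$ is in hand, \Cref{reflexivity lemma} yields reflexivity of $\phi^*(\Omega_X)$, \Cref{isomorphism_of_vector_bundles_lemma}(2) yields that $\alpha$ is an isomorphism, so $\Omega_{X/X}=0$ and $\phi$ is unramified, hence quasi-finite, and \Cref{quasi-finite to automorphism} completes the argument.
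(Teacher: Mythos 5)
Your overall skeleton matches the paper's proof exactly: establish $\mathrm{pd}_{\mathcal{O}_{X,x}}\left(\phi^*\Omega_X\right)_x \le 1$ via \Cref{projective dimension lemma} and \Cref{injectivity of stalk maps}, get reflexivity of $\phi^*(\Omega_X)$ from \Cref{reflexivity lemma}, get torsion-freeness of $\Omega_X$, conclude from \Cref{isomorphism_of_vector_bundles_lemma}(2) that $\alpha$ is an isomorphism, so that $\Omega_{X/X}=0$, and $\phi$ is unramified, hence quasi-finite, hence an automorphism by \Cref{quasi-finite to automorphism}. But at the step you yourself identify as the crux --- freeness of $(\phi^*\Omega_X)_x$ at the codimension-$2$ points of $Y$, equivalently that $\phi$ does not send such a point into the singular locus $S$ --- your proposal is a plan rather than a proof: you ``expect to exclude it'' by applying \cite[Theorem 4.6]{Kallstrom} to $\phi\colon X \to X$ itself, and you concede that verifying its hypotheses is still to be done. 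This is a genuine gap, and the proposed route is doubtful on its face. In the paper's only use of \cite[Theorem 4.6]{Kallstrom} (the $c=3$ case of \Cref{automorphism restriction result}), the source of the morphism is the non-singular variety $W$, which is what makes the first condition trivial ($B_{W/k}=\emptyset$), and the second condition is checked there using injectivity of $\phi$ at all points of codimension $\le 2$ --- exactly the hypothesis that is unavailable when $c=2$. That second condition amounts to freeness of $\Omega_{X,\phi(x)}$ at low-height points $x$, which is precisely what you are trying to prove; so your intended verification threatens to be circular, on top of the unresolved first condition for the singular source $X$.

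The missing idea is that the needed freeness is already available from an earlier result: case (iii) of \Cref{automorphism restriction result} (i.e., \Cref{Automorphism of non-singular locus theorem}) applies with $c=2$ and $X$ locally a complete intersection, and gives that $\phi(W)\subseteq W$ and that $\phi\big\vert_W$ is an automorphism of $W$. Its proof invokes K\"{a}llstr\"{o}m only through \Cref{submersive lemma} (\cite[Corollary 4.9]{Kallstrom}), applied to $\phi\big\vert_W\colon W\to X$, whose source is non-singular and whose target is lci, so no hypothesis-checking problem arises there. Granting this, every point $x$ of codimension $\le 2$ lies in $W$ by $(\mathrm{R}_2)$, hence $\phi(x)\in W$, hence $\Omega_{X,\phi(x)}$ is free and so is $(\phi^*\Omega_X)_x$; then \Cref{reflexivity lemma} yields reflexivity of $\phi^*(\Omega_X)$, and the paper applies \Cref{isomorphism_of_vector_bundles_lemma}(2) using the isomorphism $\alpha\big\vert_W$ over $W$, whose complement has codimension $\ge 3$. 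With this one substitution --- citing \Cref{automorphism restriction result}(iii) in place of an unverified application of \cite[Theorem 4.6]{Kallstrom} --- your argument becomes the paper's proof.
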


\begin{proof}
We continue with the notation of \Cref{conv1}. As $X$ satisfies $\left(R_2\right)$ (cf. \cite[p. 71]{Bruns-Herzog}), 
it follows from \cite[Proposition 8.1]{Lipman} that $\Omega_X$ is reflexive. Given a point $x$ of
$X$, the stalk $\left( \phi^*(\, \Omega_{X}\,) \right)_x$ is the same as 
$\Omega_{X ,\,\phi\,(x)} \otimes_{\mathcal{O}_{X,\,\phi\,(x)}} \mathcal{O}_{X,\,x}$. It is well known
that 
$$\mathrm{pd}_{\mathcal{O}_{X,\, \phi\,(x)}} \Omega_{X ,\,\phi\,(x)}\ \leq \ 1$$
(cf. \cite[Corollary 9.4]{Kunz_Kahler_differentials}).
Also, applying the combination of \Cref{projective dimension lemma} and \Cref{injectivity of stalk maps}
it follows that $$\mathrm{pd}_{\mathcal{O}_{X,\, x}} \left( \phi^*(\, \Omega_{X}\,) \right)_x \ \leq \ 1. $$

Consider the exact sequence of K\"{a}hler differentials 
\begin{equation}\label{eqn_temp}
\phi^*(\, \Omega_{X}\,) \, \overset{\alpha}{\longrightarrow} \, \Omega_{X} \, \longrightarrow \,
\Omega_{X/X}\, \longrightarrow\, 0.
\end{equation} 
We will now show that $\alpha$ in \eqref{eqn_temp} is an isomorphism. The restriction
$$ \phi\big\vert_{W}\ :\ W \
\longrightarrow \ W$$ was shown to be an automorphism in \Cref{Automorphism of non-singular locus theorem}.
Therefore, the restriction $$\alpha\big\vert_{W}\ : \ \phi^*(\, \Omega_{X}\,)\big\vert_{W} \ \longrightarrow \
\Omega_{X}\big\vert_{W}$$ is an isomorphism. As $X$ is regular in codimension $2$, it follows that
$(\phi^*(\, \Omega_{X}\,) )_x$ is free for each point $x$ of codimension at most $2$. Reflexivity of
$\phi^* (\, \Omega_{X}\,)$ follows from \Cref{reflexivity lemma}. Since $\alpha\big\vert_{W}$ is an
isomorphism, we have $\alpha$ to be an isomorphism by \Cref{isomorphism_of_vector_bundles_lemma}.

The exact sequence in \eqref{eqn_temp} ensures that $\Omega_{X/X}\ =\ 0$. Thus $\phi$ is quasi-finite.
Now $\phi$ is an automorphism by \Cref{quasi-finite to automorphism}.
\end{proof}

\section*{Acknowledgement}
We are very grateful to the referee for helpful comments. We are very thankful to Utsav Choudhury, R. V. 
Gurjar, Vivek Sadhu, and Chandranandan Gangopadhay for several fruitful discussions.

The first author is partially supported by a J. C. Bose Fellowship (JBR/2023/000003). The second author is supported by the
INSPIRE faculty fellowship (IFA21-MA 161) funded by the DST, 
Govt. of India. Both authors would like to thank NISER Bhubaneswar, where this work was initiated,
for its hospitality during the Geometry, Analysis, Mathematical Physics (GAMP- 2023) workshop.

\bibliographystyle{acm}

\end{document}